\documentclass[12pt,psamsfonts]{amsart}
\usepackage{amsmath}
\usepackage{amsthm}
\usepackage{amssymb}
\usepackage{amscd}
\usepackage{amsfonts}
\usepackage{amsbsy}
\usepackage{color}
\usepackage{graphicx}
\usepackage{graphics}
\usepackage[normalem]{ulem}
\textheight 240mm \textwidth 170mm
\oddsidemargin -4pt \evensidemargin -4pt \marginparwidth 10pt
\topmargin -20pt
\parskip 0.2cm

\newcommand{\RR}{\mathbb R}

\newcommand{\R}{\mathbb{R}}
\newcommand{\be}{\begin{equation}}
\newcommand{\ee}{\end{equation}}
\newcommand{\Id}{\mathrm{Id}}
\newcommand{\tr}{\mathrm{tr}}

\def\bea#1\eea{\begin{align}#1\end{align}}
\def\non{\nonumber}

\newcommand{\Qb}{Q_\xi}
\newcommand{\Rb}{R_\xi}
\newcommand{\tC}{\tilde C}
\newcommand{\bC}{\bar C}
\newcommand{\mcR}{\mathcal{R}}
\newcommand{\sS}{\mathcal{S}}

\newcommand{\mcD}{\mathcal{D}}
\newcommand{\diag}{\textrm{diag}}
%\numberwithin{equation}{section}
\newtheorem{theorem}{Theorem}[section]
\newtheorem{lemma}[theorem]{Lemma}
\newtheorem{remark}{Remark}
\newtheorem{proposition}[theorem]{Proposition}
\newtheorem{corollary}[theorem]{Corollary}

%\allowdisplaybreaks

\begin{document}
\title[Shear flow dynamics]
{Shear flow dynamics\\ in the Beris-Edwards model of nematic liquid crystals}

\author[Adrian C. Murza , Antonio E. Teruel and Arghir D. Z\u arnescu]
{Adrian C. Murza, Antonio E. Teruel and Arghir D. Z\u arnescu}
\address{Adrian C. Murza, \\ Institute of Mathematics ``Simion Stoilow''
of the Romanian Academy, Calea Grivi\c tei 21, 010702 Bucharest, Romania}
\email{adrian\_murza@hotmail.com}
\address{Antonio E. Teruel, \\ Departament de Matem\`{a}tiques i Inform\`atica, Universitat de les Illes
Balears, Crta. de Valldemossa km. 7.5, 07122 Palma de Mallorca, Spain}
\email{antonioe.teruel@uib.es}
\address{Arghir D. Z\u arnescu
\\ IKERBASQUE, Basque Foundation for Science, Maria Diaz de Haro 3,
48013, Bilbao, Bizkaia, Spain}\address{BCAM,  Basque  Center  for  Applied  Mathematics,  Mazarredo  14,  E48009  Bilbao,  Bizkaia,  Spain}\address{``Simion Stoilow" Institute of the Romanian Academy, 21 Calea Grivi\c{t}ei, 010702 Bucharest, Romania}
\email{azarnescu@bcamath.org}

\keywords{}

\begin{abstract} We consider the Beris-Edwards model describing nematic liquid crystal dynamics and restrict to a shear flow and  spatially homogeneous situation. We analyze the dynamics focusing on the effect of the flow. We show that in the co-rotational case one has gradient dynamics, up to a periodic eigenframe rotation, while in the non-co-rotational case we identify the short and long time regime of the dynamics. We express these in terms of the physical variables and compare with the predictions of other models of liquid crystal dynamics. 
\end{abstract}

\maketitle

\section{Introduction}

Liquid crystals are a mysterious material that is still poorly understood at a basic, fundamental level, despite its impressive technological applications, particularly in liquid crystal displays. It is a material that flows like a liquid, yet it has some properties specific to solids, such as  optical properties, that are revealed for instance when passing polarised light through it. 

There exist several models that compete in  attempting to provide a description at the continuum level.  The most comprehensive models  regard the material as a complex non-Newtonian fluid, hence they use a Navier-Stokes equation describing the average velocity of the molecules, coupled with a reaction-diffusion-convection equation describing roughly the evolution of the directions of the anisotropic molecules.  As such its study is mostly related to fluid mechanics. However, because of the presence of the Navier-Stokes equations the qualitative behaviours of the model are in general very difficult to understand.

On the other hand materials scientists are interested in features of the material relevant in regimes that do not involve significantly its flow behaviour, as it is the case for instance in liquid crystal displays. Thus they study simplified models obtained most often by formally dropping the flow out of the previously mentioned complex fluid equations. The simplified models are easier to understand particularly from the point of view of obtaining qualitative predictions.

 However, it is not clear in general what is lost through this simplification and to what extent the presence of the flow significantly affects the dynamics. One setting in which one can understand the presence of the flow, widely used in the engineering studies, and in the rheological literature in determining various properties of liquid crystals, is to consider the effect of a shear flow. This is a rather well behaved flow, for which the Navier-Stokes system simplifies dramatically, yet it produces non trivial effects. Intuitively this simplification allows to describe the local behaviour of the system near a non-singular point of the velocity.

There exist several types of liquid crystals, but we will consider just the simplest and most used in practice, the nematic liquid crystals. For these we use a model well studied in the recent years \cite{ADL14, BE94, CRWX15, PZ12}, that combines analytical tractability with physical relevance, namely the Beris-Edwards model. It is a system for the unknowns  $\mathbf{u}(x, t): \RR^3\times (0, +\infty) \rightarrow \RR^3$
representing the incompressible fluid velocity field (of the liquid crystal molecules) and $Q(x, t):
\RR^3\times (0, +\infty) \rightarrow \sS_0^{(3)}$ standing for the
order parameter of liquid crystal molecules, where we denote by $\sS_0^{(3)}$ the $Q$-tensor space
$$
  \sS_0^{(3)}:=\big\{Q\in\mathbb{R}^{3\times 3}| \, Q^{ij}=Q^{ji}, \,\forall 1\leq i, j\leq 3, \, \tr(Q)=0
  \big\}.
$$

Then the Beris-Edwards system, in non-dimensional form, is:

\begin{align*}
\mathbf{u}_t+(\mathbf{u}\cdot\nabla)\mathbf{u}-\nu\Delta\mathbf{u}+\nabla{P}=&
L\nabla\cdot(Q\Delta{Q}-\Delta{Q}Q)-L\nabla\cdot(\nabla{Q}\odot\nabla{Q})\non\\
&-\xi L\nabla\cdot\bigg(\Delta Q(Q+\frac{1}{3}Id)+(Q+\frac{1}{3}Id)\Delta Q-2(Q+\frac{1}{3}Id)(Q:\Delta Q) \bigg)\non\\
&-2\xi\nabla\cdot\bigg(\big(Q+\frac{1}{3}\Id\big)(\frac{\partial F}{\partial Q}:Q-\frac{\partial F}{\partial Q})\bigg)\\
\nabla\cdot\mathbf{u}&=0,\label{incomp}
\end{align*} where we denoted $A:B:=\textrm{tr}(AB)$ and $Id$ is the $3\times 3$ identity matrix. This is an equation for  the flow   $\mathbf{u}$ representing the local average velocity of the centers of mass of the rod-like molecules. It is a Navier-Stokes equations with an additional stress tensor encoding the non-Newtonian effect that the interaction of the particles has on their motion.

On the other hand the local orientation of the molecules, represented by $Q$, is transported by the flow, rotated and aligned by the flow, and also driven by the bulk free energy $F$ of the molecules as well as being subjected to Brownian motion:
\begin{align*}
Q_t+(\mathbf{u}\cdot\nabla )Q=&(\xi D+W)\big(Q+\frac13 Id
\big)+\big(Q+\frac13 Id \big)(\xi
D-W)\non\\
&-2\xi\big(Q+\frac13 Id \big)\tr(Q\nabla\mathbf{u})+\Gamma\left(L\Delta{Q}-\frac{\partial F}{\partial Q}\right).
\end{align*} where $F:=\frac{a}{2}\tr(Q^2)-\frac{ b}{3}\mathrm{tr}(Q^3)+\frac{ c}{4}\tr^2(Q^2)$ and
\begin{equation}\label{def:partialFQ}
\frac{\partial F(Q)}{\partial Q}=aQ-b(Q^2-\frac{1}{3}Id |Q|^2)+cQ|Q|^2
\end{equation} is its gradient in  $\sS_0^{(3)}$  (with the term $-\frac{1}{3}Id |Q|^2$ representing a Lagrange multiplier accounting for the trace-free constraint).  The parameters $b,c$ represent material-dependent constants and $a$ depends on material and temperature. We will assume throughout the following restrictions (see \cite{newtonmottram} ):

$$
b,c>0, b^2-24ac>0
$$

The matrices $D:=\dfrac{\nabla\mathbf{u}+\nabla^T\mathbf{u}}{2}$,
$W:=\dfrac{\nabla\mathbf{u}-\nabla^T\mathbf{u}}{2}$ denote the
symmetric and skew-symmetric parts of the  velocity matrix,
respectively.  The coefficient $a\in \R$ is material and temperature dependent, while $b\in \R$ and $c\in\R_+$ are only material dependent, see \cite{newtonmottram}. The parameter $\xi$ is related to the aspect ratio of the liquid crystal molecules and heuristically speaking it quantifies  the ratio between two effects that the flow has on the liquid crystal molecules: the rotating effect, related to the term $[W,Q]$ and the aligning effect that is related to the terms with $\xi$ in front.

We restrict  now  by taking  $\mathbf{u}(x,y,z)=(2y,0,0)$ to be a shear flow and $Q$ homogeneous in space. This setting is often considered in the rheological literature and captures the statistical aspects of the flow, in particular combining the dynamical aspects with the phase transitions effects that the nonlinearity in $Q$ is capable to describe.  Then we have:
\begin{equation}\label{mat}
W=\left(
\begin{array}{ccc}
0&1&0\\-1&0&0\\0&0&0
\end{array}
\right)
\hspace{2cm}
D=\left(
\begin{array}{ccc}
0&1&0\\1&0&0\\0&0&0
\end{array}
\right),
\end{equation} the equation for  $\mathbf{u}$  is trivially satisfied (with $P\equiv 0$) while the equation for $Q$ reduces to (where we will take for simplicity $\Gamma=1$):

\bea\label{eqgen1}
\frac{d}{dt}Q &=[W,Q]+\xi[DQ+QD]+\frac{2\xi}{3}D-2\xi\left(Q+\frac{1}{3}Id\right)\mathrm{tr}[QD]-\frac{\partial F(Q)}{\partial Q}\non\\
&=[W,Q]+\xi[DQ+QD]+\frac{2\xi}{3}D-2\xi\left(Q+\frac{1}{3}Id\right)\mathrm{tr}[QD]- \left(aQ-b(Q^2-\frac{1}{3}Id |Q|^2)+cQ|Q|^2\right)
\eea with $[W,Q]$ denoting the commutator of $W$ and $Q$, i.e. $[W,Q]=WQ-QW$. We assume without loss of generality that our model has been non-dimensionalised (which can be done in a standard way, completely analogously as in \cite{sluckin}) so all of our parameters are non-dimensional.

 It is worth comparing the above equation with the shear-flow model considered for instance in \cite{sluckin,chill}:

\begin{equation}\label{eqgenfake}
\begin{array}{l}
\displaystyle{\frac{d}{dt}\bar Q=\delta([W,Q]+\gamma D)-\frac{\partial F(Q)}{\partial Q}}\end{array}
\end{equation}

It was shown in \cite{sluckin,chill} that the model predicts a 
certain anomalous nongeneric continua of equilibria, the existence of these
continua shows that the model is structurally unstable. Our model is more nonlinear and will present a physically more realistic behaviour, in particular it will be seen that asymptotically the effect of the flow disappears and one obtains evolutions towards the steady state of the case without flow. 

In our case the major difference it between the case when $\xi=0$ and $\xi\not=0$.  The case $\xi=0$ is called in the literature the ``co-rotational case" and is known to be much simpler. In fact we will see that it amounts to a combination of rotation in time and gradient flow behaviour. More precisely the flow will rotate the eigenframe of the matrices with an explicit rate of rotation while the non-trivial dynamics will occur just at the level of eigenvalues but not at the level of eigenframe.  

 The case $\xi\not=0$ is much more complex and is responsible with an extraordinary  increase in the complexity of the dynamics. 
As such we will focus on understanding too asymptotic regimes, the short-time and the long-time regimes. It turns out that these are related to the size of $\xi$ in a certain sense, to be detailed later. 

The paper is organised as follows: in Section $2$ we consider the co-rotational case ($\xi=0$) and we will show that one can completely understand the dynamics and in particular obtain periodic in time solutions. In Section $3$ we will analyze the non-corotational case and focus on the short and long time regimes. We will see that we can obtain the phase portrait of the short time regime, while the long time dynamics reduce to evolution towards the manifold of stationary states (of the case without flow). In Section $4$ we discuss the results previously obtained in terms of the usual physical variables (scalar order parameters and the director) while commenting on their relevance and the degeneracies one has using them. Finally in  Section $5$  we provide a conclusion summarising the results thus obtained.

{\bf Notations and conventions:} We denote by  $\diag (a,b,c)$ the three-by-three diagonal matrix with elements $a,b$ and $c$ (from top to bottom line).  For $n,m\in\R^3$ we let $n\otimes n$ to be the three-by-three matrix with $n_im_j$ as the $ij$-th component.   We denote by $Id$ the three-by-three identity matrix and by $Id_2=e_1\otimes e_1+e_2\otimes e_2$ where $e_1=(1,0,0),e_2=(0,1,0)$.

For two three-by-three matrices $A$ and $B$ we take the scalar product in the space of matrices to be $A:B=\tr(B^tA)$ which produces the ``Frobenius norm": $|A|=\sqrt{\textrm{tr}(A^2)}$.

\section{The co-rotational case ($\xi=0$): gradient dynamics and their ``rotated" version}
\label{sec:corotational}
In this section we restrict ourselves to studying the case $\xi=0$, which is a limit case, in which the dynamics simplify significantly. In this case the equation \eqref{eqgen1} becomes

\begin{equation}\label{eqgen2}
\begin{array}{l}
\displaystyle{\frac{d}{dt}Q=[W,Q]- \frac{\partial F(Q)}{\partial Q}}=[W,Q]-\left(aQ-b(Q^2-\frac{1}{3}Id |Q|^2)+cQ|Q|^2\right)
\end{array}
\end{equation}

Then we have that up to a time dependent  rotation of the eigenvectors, the dynamics involve just the evolution of eigenvalues. More precisely we have the following:

\begin{proposition}
Consider the equation \eqref{eqgen2} where $a,b\in \R$ and $ c\in\R_+$, with $W$ as defined in \eqref{mat}. Then letting $B(t)=e^{Wt}=\left(\begin{array}{lll} \cos t & \sin t & 0 \\ -\sin t & \cos t & 0 \\ 0 & 0 & 0\end{array}\right)$ and denoting $U(t):=B^t(t)Q(t)B(t)$ we have that  $U$ is a solution of the gradient flow system:

\begin{equation}\label{eqgrad0}
\begin{array}{l}
\displaystyle{\frac{d}{dt}U=-\frac{\partial F(U)}{\partial U}}
\end{array}
\end{equation} for which  all $\omega$-limit points belong to the set of stationary points.

 Thus for any initial data $Q_0\in \sS_0^{(3)}$ there exists sequences $(t_k)_{k\in\mathbb{N}}$ with $t_k\to\infty$ and $s\in \{0,\frac{b+\sqrt{b^2-24ac}}{4c},\frac{b-\sqrt{b^2-24ac}}{4c}\}$ (depending on $Q_0$ and the time sequence) such that 

$$
\lim_{t_k\to\infty } |Q(t_k)-s\left(n(t_k)\otimes n(t_k)-\frac{1}{3}Id\right)|=0
$$ with $n(t)=(n_1\cos t+n_2\sin t, -n_1\sin t+n_2\cos t,0)$ for some  $n_1^2+n_2^2\le 1$ .

Moreover, if $\{n,m,p\}$ are an orthonormal family of eigenvectors of $Q_0$ then $\{B(t)n,B(t)m, B(t)p\}$ are an orthonormal family of eigenvectors of $Q(t)$, for any $t\ge 0$.
\end{proposition}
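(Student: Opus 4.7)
The plan is to show that the substitution $U(t) := B^t(t)Q(t)B(t)$ exactly absorbs the rotational term $[W,Q]$ from \eqref{eqgen2}, converting it into a pure gradient flow that is then handled by classical Lyapunov analysis. First I would differentiate $U$ using $\dot B = WB$ and, by skew-symmetry of $W$, $\dot B^t = -B^t W$: the three terms that involve $W$ combine into $B^t(-WQ + [W,Q] + QW)B = 0$, leaving $\dot U = -B^t(\partial F/\partial Q)(Q)B$. Because $B(t)$ is orthogonal, $B^t Q^k B = U^k$ for every $k$ and $|Q|=|U|$; substituting into the explicit polynomial expression \eqref{def:partialFQ} turns the right-hand side into $-(\partial F/\partial U)(U)$, giving \eqref{eqgrad0}.

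The second step would be standard gradient-flow analysis of $U$. Thanks to $c>0$ the potential $F$ is coercive, so the energy identity $\frac{d}{dt}F(U) = -|\partial F/\partial U|^2$ keeps $U(t)$ bounded and forces $\int_0^\infty |\partial F/\partial U|^2\,dt < \infty$; LaSalle's principle then places every $\omega$-limit point in the critical set of $F$. To classify the critical points I would write $\partial F/\partial U = 0$ in the eigenframe of $U$: every eigenvalue $\lambda$ satisfies the single fixed quadratic $b\lambda^2 - (a+c|U|^2)\lambda - \frac{b}{3}|U|^2 = 0$, so at most two distinct values appear; combined with $\tr U = 0$ this forces the uniaxial form $U_\ast = s(n_0 \otimes n_0 - \frac{1}{3}Id)$, and substituting back reduces the critical-point equation to $2cs^2 - bs + 3a = 0$, producing precisely the three values of $s$ listed.

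Finally I would translate back to $Q$. For any sequence $t_k \to \infty$ along which $U(t_k) \to U_\ast$, orthogonal invariance of the Frobenius norm yields
\[
|Q(t_k) - s(n(t_k)\otimes n(t_k) - \tfrac{1}{3}Id)| = |B(t_k)(U(t_k)-U_\ast)B^t(t_k)| = |U(t_k) - U_\ast| \to 0,
\]
where $n(t) := B(t) n_0$, and a direct calculation of $e^{Wt} n_0$ gives the stated components. For the eigenframe assertion, the key observation is that $\partial F/\partial U$ is a matrix polynomial in $U$ with coefficients depending only on $|U|^2$: diagonalising $Q_0 = R_0 D_0 R_0^T$ and letting $D(t)$ solve the reduced ODE on diagonal matrices, $R_0 D(t) R_0^T$ satisfies the gradient flow with the same initial datum, so by ODE uniqueness $U(t) = R_0 D(t) R_0^T$ throughout, meaning $U(t)$ shares its orthonormal eigenframe with $Q_0$; conjugation by $B(t)$ transfers this frame to $Q(t)$. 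The main obstacle is establishing the two equivariance facts --- orthogonal invariance of $\partial F/\partial Q$ and isospectrality of the gradient flow on $U$ --- which both rest on the polynomial structure of $\partial F/\partial Q$ with isotropic coefficients.
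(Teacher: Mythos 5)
Your proposal is correct and follows essentially the same route as the paper: conjugation by $B(t)=e^{Wt}$ to cancel the commutator term, boundedness plus standard gradient-flow/LaSalle theory to place the $\omega$-limit points in the critical set of $F$, and the diagonal-data-stays-diagonal uniqueness argument (conjugation by a time-independent rotation) for the eigenframe claim. The only minor differences are that you get boundedness from coercivity of $F$ and the energy identity $\frac{d}{dt}F(U)=-|\partial F/\partial U|^2$ instead of the paper's differential inequality for $|U|^2$, and you derive the classification of critical points (reducing to $s(2cs^2-bs+3a)=0$) directly where the paper cites the literature.
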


\begin{proof} We introduce the rotation operators $B:\R\to O(3)$ as the solution of the system:

\be\label{eq:B}
\left\{\begin{array}{l}
\frac{d}{dt}B(t)=WB(t)\\
B(0)=Id
\end{array}\right.
\ee In order to see that $B(t)\in O(3)$ for all $t\ge 0$ we note that we have: $\frac{d}{dt}B^t(t)=-B^t(t)W$. Denoting $M(t):=B(t)B^t(t)$ we see that $M(t)$ is a solution of the ODE system $\frac{d}{dt}M=W M-MW$ with $M(0)=Id$. Noting that $Id$ is a solution of this system, by uniqueness we have $M(t)=Id$ for all $t\ge 0$. In fact, one can check that we have $B(t)=e^{Wt}=\left(\begin{array}{lll} \cos t & \sin t & 0 \\ -\sin t & \cos t & 0 \\ 0 & 0 & 0\end{array}\right)$.

We further denote $U(t):=B^t(t)Q(t)B(t).$ Then \eqref{eqgen2} becomes
\bea
\frac{d}{dt}U(t)&=(\frac{d}{dt}B^t)QB+B^t(\frac{d}{dt}Q)B+B^tQ(\frac{d}{dt}B)=\left(B^t\left(\frac{d}{dt}Q-W Q+QW\right)B\right)(t)\non\\
&=\left(B^t\left(-aQ+b[Q^2-\frac{1}{3}| Q|^2Id]-c Q| Q|^2\right)B\right)(t)\non\\
&=\left(-aU+b[U^2-\frac{1}{3}|U|^2Id]-cU| U|^2\right)(t)\non\\
&=- \frac{\partial F(U)}{\partial U}\label{eq:paths}
\eea hence the equation for $U$ is a gradient type equation.

We multiply  \eqref{eq:paths} by $2 U$ and take the trace. Denoting $\alpha(t):=|U(t)|^2$ we obtain:
 
\be\label{est:alpha}
\alpha'(t)=-2a\alpha-2b \textrm{tr}(U^3)-2c \alpha^2
\ee

We note that if the matrix  $U$ has eigenvalues $\lambda,\mu$ and $-\lambda-\mu$ then $\alpha=2(\lambda^2+\mu^2+\lambda\mu)$ and $\textrm{tr}(U^3)=-3\lambda\mu(\lambda+\mu)$ hence we have:

$$
|\textrm{tr}(U^3)|\le\frac{3}{2}(\frac{\epsilon}{4}\alpha^2+\frac {1}{\epsilon}\alpha)
$$ for any $\epsilon>0$.  Using this last relation together with \eqref{est:alpha} we obtain

\[
\alpha'(t)\le -2a\alpha+3|b|(\frac{\epsilon}{4} \alpha^2+\frac{1}{\epsilon}\alpha)-2c\alpha^2
\]

Taking $\epsilon=\frac{4c}{3|b|}$ and denoting $\delta:=\frac{9b^2}{4c}-2a$ the last inequality leads to

$$
\alpha'\le \delta \alpha-c\alpha^2\le -|\delta|\alpha+M
$$ where $M=\left(\frac{\delta-|\delta|}{2\sqrt{c}}\right)^2$.

Multiplying by $e^{|\delta|t}$ and integrating over $[0,t]$ we have:
$$
\alpha(t)\le \alpha (0)e^{-|\delta|t}+\frac{M}{|\delta|}
$$ thus the trajectories are bounded, and then the general theory of gradient systems allows to conclude that the limit points are critical points of $F$ (see for instance \cite{hale}). On the other hand it is known (see for instance \cite{maj}) that critical points of $F$ are in the set $\{s\left(n\otimes n-\frac{1}{3}Id\right),\, n\in\mathbb{S}^2,s\in\{0,\frac{b\pm\sqrt{b^2-24ac}}{4c}\}.$

We continue by claiming  that the dynamics of \eqref{eqgrad0} affects only the eigenvalues, but not the eigenvectors. The argument follows the one in \cite{IXZ} and is presented here for completeness. Indeed, let us consider the system:
\begin{align}\label{eigensystem}
\frac{d\lambda}{dt}&=-\lambda\big[2c(\lambda^2+\mu^2+\lambda\mu)+a\big]+ b\Big(\frac{1}{3}\lambda^2
-\frac{2}{3}\mu^2-\frac{2}{3}\lambda\mu\Big),\nonumber\\
\frac{d\mu}{dt}&=-\mu\big[2c(\lambda^2+\mu^2+\lambda\mu)+a\big]
+ b\Big(\frac{1}{3}\mu^2-\frac{2}{3}\lambda^2-\frac{2}{3}\lambda\mu\Big).
\end{align}
The right hand side of the system is a locally Lipschitz function so
the system has a solution locally in time (in fact with some more
work global  in time and bounded, using arguments similar to the
ones before for the matrix system).

On the other hand, let us  now take an initial data
$
  Q_0=\textrm{diag}( \lambda^0,\mu^0-\lambda^0-\mu^0)
$
and denote $
  \bar Q(t):=\diag (\lambda(t) , \mu(t), -\lambda(t)-\mu(t)).
$
Then, if $\lambda(t),\mu(t)$ are solutions of
\eqref{eigensystem} with initial data $(\lambda^0,\mu^0)$
then $\bar Q(t)$ is a solution of 	\eqref{eqgrad0} with initial
data $Q_0$. On the other hand, by uniqueness of solutions of
\eqref{eqgrad0}, it must be the only solution corresponding to
the diagonal initial data $Q_0$. Thus we have shown that a diagonal
initial data will generate a diagonal solution.

For an arbitrary, non-diagonal initial data $\tilde Q_0$, since
$\tilde Q_0$ is a symmetric matrix, there exists a matrix $R\in
O(3)$, such that
$
  R\tilde
  Q_0R^t=\diag (\tilde\lambda^0, \tilde\mu^0,
   -\tilde\lambda^0-\tilde\mu^0),
$
where $(\tilde \lambda^0,\tilde
\mu^0,-\tilde\lambda^0-\tilde\mu^0)$ are the
eigenvalues of $\tilde Q_0$. If $Q(t)$ is a solution of
\eqref{eqgrad0} with initial data $\tilde Q_0$, then
multiplying on the left by {\it the time independent matrix }$R$,
and on the right by {\it the time independent matrix} $R^t$, using
the fact that $RR^t=\Id$ (as $R\in O(3)$), we obtain the following
equation:
\begin{align}
\frac{d}{dt}RQ(t)R^t=-& aRQ(t)R^t+ b\left(RQ(t)R^tRQ(t)R^t-\frac{1}{3}\tr (RQ(t)R^tRQ(t)R^t)\Id\right)\non\\
&-cRQ(t)R^t\tr\big(RQ(t)R^tRQ(t)R^t\big).\non
\end{align}
Hence if we denote by $M(t):=RQ(t)R^t$, we conclude that $M$
satisfies equation \eqref{eqgrad0}
 with initial data
$ M_0:= R\tilde Q_0R^t=\diag (\tilde\lambda^0 , \tilde\mu^0,
    -\tilde\lambda^0-\tilde\mu^0)$. Since the initial data is diagonal, we infer by previous arguments
that $M(t)$ is diagonal for all times and $
   M(t)=\diag (\lambda(t), \mu(t), -\lambda(t)-\mu(t)),
$
with $\lambda(t),\mu(t)$ solutions of \eqref{eigensystem}
with initial data $(\tilde\lambda^0,\tilde\mu^0)$. Thus we
obtain that
$
  M(t)=RQ(t)R^t=\diag (\lambda(t), \mu(t), -\lambda(t)-\mu(t)),
$
hence
$
  Q(t)=R^t\diag (\lambda(t) , \mu(t) ,
  -\lambda(t)-\mu(t))R
$ which proves our claim concerning the eingevectors of $Q(t)$.

\end{proof}

Out of the previous proof one can obtain in particular the existence of solutions that are periodic in time:

\begin{corollary}
Consider the system \eqref{eqgen2} with $b,c>0$ and $b^2-24ac>0$.  Let $Q_0:=s_\pm\left(n\otimes n-\frac{1}{3}Id\right)$ with $n\in\mathbb{S}^2$ and $s_\pm=\frac{b\pm\sqrt{b^2-24ac}}{4c}$.  Then the solution of \eqref{eqgen2} with initial data $Q_0$ is periodic in time and given by:

$$
Q(t)=s_\pm\left(n(t_k)\otimes n(t_k)-\frac{1}{3}Id\right)
$$ with  $n(t)=(n_1\cos t+n_2\sin t, -n_1\sin t+n_2\cos t,0)$.
\end{corollary}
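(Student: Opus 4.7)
The strategy is to apply the proposition's change of variables $U(t):=B^{t}(t)Q(t)B(t)$ and exploit the fact that the prescribed initial datum is \emph{precisely} a critical point of $F$. Indeed, the proposition already identifies the critical set of $F$ as $\{s(n\otimes n-\frac{1}{3}Id)\,:\,n\in\mathbb{S}^2,\ s\in\{0,s_\pm\}\}$, so one can read off immediately that $\frac{\partial F}{\partial U}(Q_0)=0$. For completeness the plan is to verify this by direct substitution into \eqref{def:partialFQ}, using that $(n\otimes n)^2=n\otimes n$ and $|n\otimes n-\frac{1}{3}Id|^2=\frac{2}{3}$, which reduces to the scalar equation $cs^2-bs+3a=0$ whose roots are exactly $s_\pm$.

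Next, I would apply the transformation from the proposition. Since $U(0)=B^{t}(0)Q_0B(0)=Q_0$ is a stationary point of the gradient flow \eqref{eqgrad0}, uniqueness of solutions to the Lipschitz ODE \eqref{eqgrad0} forces $U(t)\equiv Q_0$ for all $t\ge 0$. Inverting the relation $U=B^{t}QB$, and using $B(t)B^{t}(t)=Id$ (proved in the proposition), one obtains
\[
Q(t)=B(t)Q_0B^{t}(t)=s_\pm\Bigl(B(t)(n\otimes n)B^{t}(t)-\tfrac{1}{3}B(t)B^{t}(t)\Bigr).
\]
Then I would use the elementary identity $B(t)(n\otimes n)B^{t}(t)=(B(t)n)\otimes(B(t)n)$, so that setting $n(t):=B(t)n$ gives the claimed formula. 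A short explicit multiplication with the matrix $B(t)=e^{Wt}$ displayed in the proposition yields the stated components $n(t)=(n_1\cos t+n_2\sin t,\,-n_1\sin t+n_2\cos t,\,0)$.

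Finally, periodicity is immediate: since $B(t+2\pi)=B(t)$, the map $t\mapsto Q(t)$ is $2\pi$-periodic. There is essentially no obstacle in this argument; the only nontrivial ingredient is that $Q_0$ is a zero of the bulk torque $\frac{\partial F}{\partial Q}$, which is a one-line scalar computation once $s_\pm$ are recognized as the roots of the indicial equation. Everything else is a direct consequence of the change of variables and the explicit form of $B(t)$ already established in the proposition.
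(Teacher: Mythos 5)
Your argument is correct and is exactly the route the paper intends: the paper gives no separate proof of the corollary, deriving it ``out of the previous proof'' precisely by observing that $Q_0$ is a critical point of $F$, hence $U(t)\equiv Q_0$ for the gradient flow \eqref{eqgrad0}, and conjugating back by $B(t)$ gives the rotating, periodic solution. One small slip in your optional verification: substituting $Q=s\bigl(n\otimes n-\tfrac{1}{3}Id\bigr)$ into \eqref{def:partialFQ} gives $\frac{\partial F}{\partial Q}(Q)=\bigl(as-\tfrac{b}{3}s^{2}+\tfrac{2c}{3}s^{3}\bigr)\bigl(n\otimes n-\tfrac{1}{3}Id\bigr)$, so the indicial equation is $2cs^{2}-bs+3a=0$ (whose roots are indeed $s_\pm=\frac{b\pm\sqrt{b^{2}-24ac}}{4c}$), not $cs^{2}-bs+3a=0$; this does not affect the proof since the proposition's description of the critical set already covers $Q_0$. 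Note also that your two ingredients are only simultaneously consistent if the $(3,3)$ entry of $e^{Wt}$ is $1$ rather than the $0$ displayed in the paper: with the genuine rotation one has $B(t)B^{t}(t)=Id$ and $n(t)=B(t)n=(n_1\cos t+n_2\sin t,\,-n_1\sin t+n_2\cos t,\,n_3)$, so the stated formula with vanishing third component implicitly assumes $n_3=0$ (an imprecision inherited from the paper, not an error of yours).
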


\section{The non-corotational case ($\xi\not=0$): the short and long time regimes}

\subsection{Identifying the time regimes: a numerical insight and the magnitude the $\xi$ }

We can reprezent $Q$ in coordinates as

\be
Q:=\left(\begin{array}{lll} x & z & v \\ z & y & w \\ v & w & -x-y \end{array}\right)
\ee

We simplify and  take $v=w=0$ (which is a physically relevant regime, see Section~\ref{sec:physics}, consistent with the equations ), and then the system  \eqref{eqgen1} reduces to:

%\bea
%\dot x&=\frac{2}{3}(1-6x)z\xi+2z-ax+\frac{b}{3} (x^2-2xy-2y^2-2w^2+v^2+z^2)-2cx(x^2+y^2+z^2+v^2+w^2+xy)\non\\
%\dot y&=\frac{2}{3}(1-6y)z\xi-2z-ay+\frac{b}{3}(-2x^2-2xy+y^2+w^2-2v^2+z^2)-2cy(x^2+y^2+z^2+v^2+w^2+xy)\non\\
%\dot z&=(\frac{2}{3}+x+y-4z^2)\xi-x+y-az+b(vw+xz+yz)-2cz(x^2+y^2+z^2+v^2+w^2+xy)\non\\
%\dot v&=(w-4vz)\xi+w-av+b(zw-yv)-2cv(x^2+y^2+z^2+v^2+w^2+xy)\non\\
%\dot w&=(v-4wz)\xi-v-aw+b(zv-xw)-2cw(x^2+y^2+z^2+v^2+w^2+xy)
%\eea

\bea\label{reduced:odesystem}
\dot x&=\frac{2}{3}(1-6x)z\xi+2z-ax+\frac{b}{3} (x^2-2xy-2y^2+z^2)-2cx(x^2+y^2+z^2+xy)\non\\
\dot y&=\frac{2}{3}(1-6y)z\xi-2z-ay+\frac{b}{3}(-2x^2-2xy+y^2+z^2)-2cy(x^2+y^2+z^2+xy)\non\\
\dot z&=(\frac{2}{3}+x+y-4z^2)\xi-x+y-az+b(xz+yz)-2cz(x^2+y^2+z^2+xy)
\eea

We consider now a couple of simulations, to understand the dynamics provided by this system. These are obtained by taking 
$a=-0.2,b=0.1,c=0.1$ and time $t=50$ but varying $\xi$. 

\begin{figure}[h]\label{fig:dyn}
\centering
\begin{tabular}{c}
\includegraphics[scale=0.5]{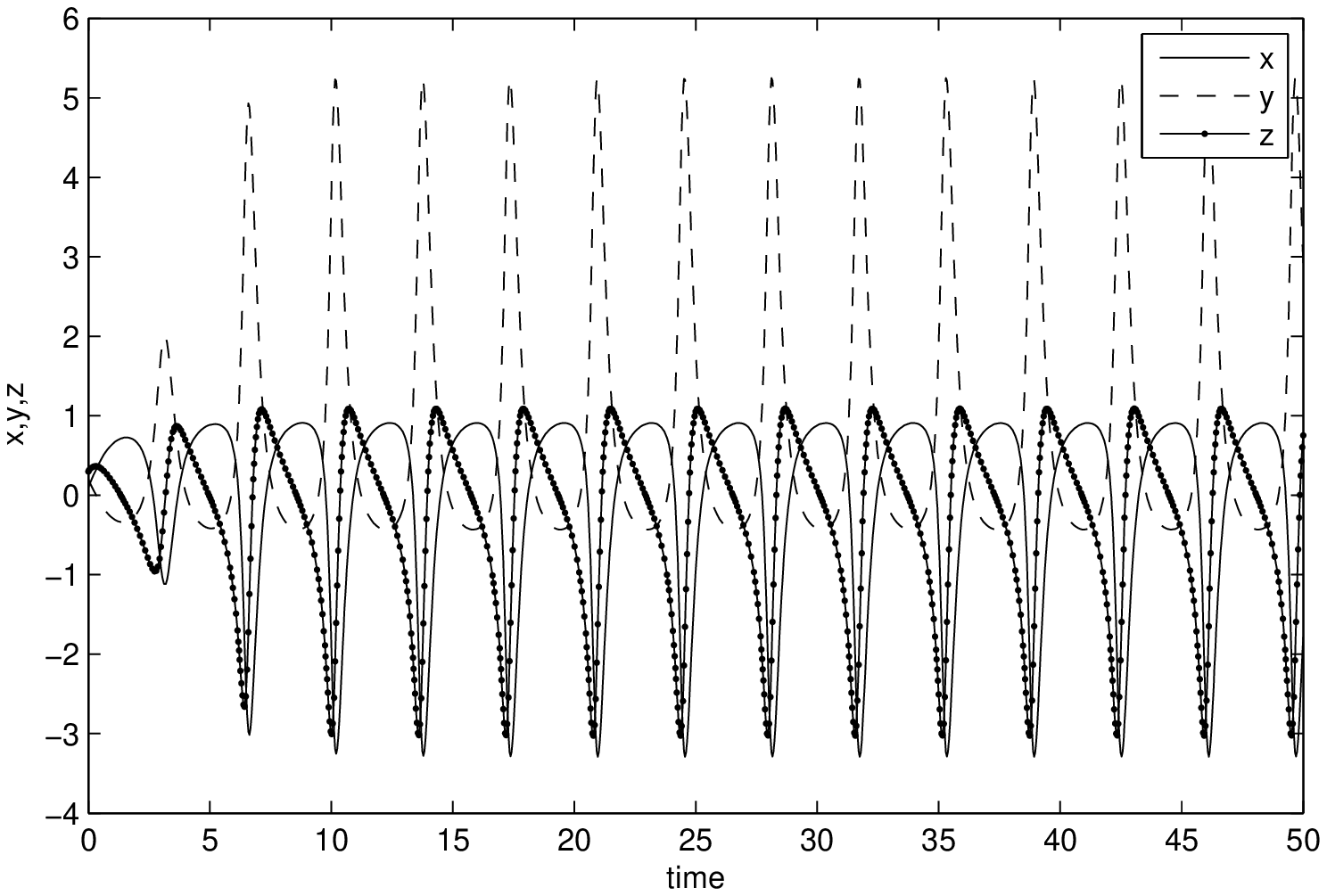}
\\\includegraphics[scale=0.5]{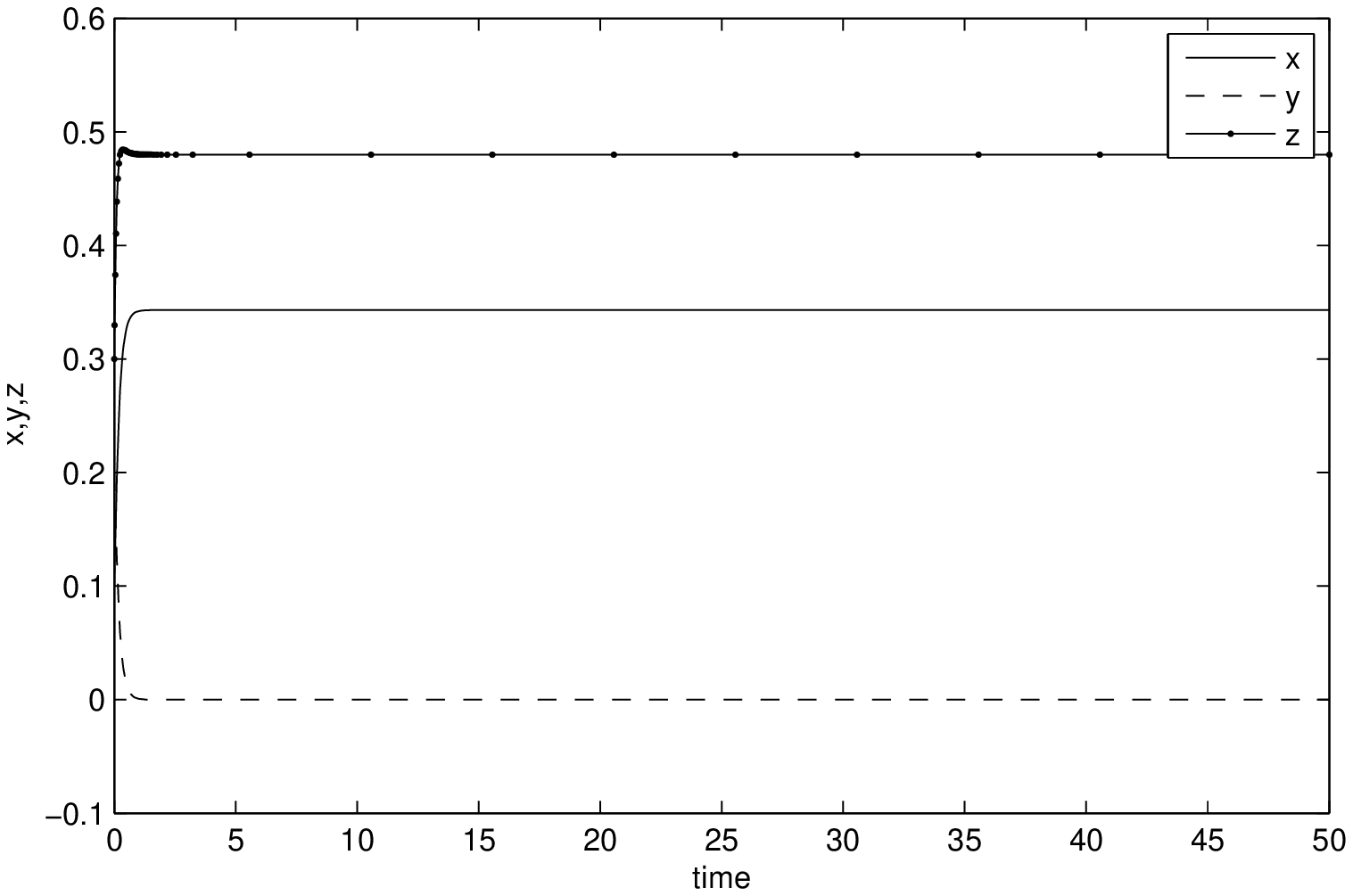}
\end{tabular}
\caption{Dynamics for $\xi=0.5$ (top) and $\xi=3$ (bottom)}
\end{figure}

What we obtain is that the dynamics are dramatically different depending on the size of $\xi$. For small $\xi$ we have essentially dynamics as provided in the co-rotational case, i.e. evolving to periodic solutions, while for large $\xi$ the periodicity is destroyed and we evolve fast towards a steady state. 

In order to obtain an analytical inside into this, let us consider the new function:

\be
Q_{\xi}(t):=Q(\frac{t}{\xi})
\ee

Then $\Qb$ satisfies the system:

\bea
\frac{d}{dt}\Qb=&[D\Qb+\Qb D]+\frac{2}{3}D-2\left(\Qb+\frac{1}{3}Id\right)\mathrm{tr}[\Qb D]+\frac{1}{\xi}\bigg([W,\Qb]-\frac{\partial F}{\partial Q}(\Qb)\bigg)\nonumber\\
=&[D\Qb+\Qb D]+\frac{2}{3}D-2\left(\Qb+\frac{1}{3}Id\right)\mathrm{tr}[\Qb D]\non\\
&+\frac{1}{\xi}\bigg([W,\Qb]-a\Qb+b(\Qb^2-\frac{1}{3}  |\Qb|^2 Id)-c\Qb|\Qb|^2\bigg)\label{eq:Qbinfty} 
\eea

Thus we have two regimes:

\begin{itemize}
\item {\bf The ``short time" regime, when  $\xi\to\infty $}.  In this  case the equation $\Qb$ formally converges on finite intervals to:

\be\label{eq:initialtime}
\frac{d}{dt}R=[DR+RD]+\frac{2}{3}D-2\left(R+\frac{1}{3}Id\right)\mathrm{tr}[RD]
\ee

Note that as $\xi\to\infty$ we have $Q(\frac{t}{\xi})\to Q(0)$ so the equation \eqref{eq:initialtime} describes the behaviour near the initial time.

\item {\bf The ``long time" regime, when   $\xi\to 0 $}.  In this case the $\Qb$ formally converges to:

\be\label{eq:stationaryflow}
0=[W,Q]-\frac{\partial F(Q)}{\partial Q}
\ee so one would expect  evolution towards a fixed point, solution of this stationary equation.

\end{itemize}

\begin{remark}
It should be noted that one does not obtain through these rescalings a direct understanding of the simulations in Figure~\ref{fig:dyn} which represent evolutions at intermediary times (neither too long, nor too short), unlike in the analytical arguments that will be provided, analyzing asymptotic regimes. However the simulations are useful for showing the complexity of the intermediary time regimes. 
\end{remark}

\subsection{The short time regime}

We have the following proposition formalizing the intuition mentioned before:

\begin{proposition}
Let $\Qb$ be the solution  of \eqref{eq:Qbinfty} with $\Qb(0)=Q_0$ and $R$ the solution of \eqref{eq:initialtime} with $R(0)=Q_0$. There exists a time $T^*$ depending only on $|Q_0|$ but independent of $\xi$ such that the solutions for both \eqref{eq:Qbinfty} and \eqref{eq:initialtime} exist on $[0,T^*]$ and moreover we have:

\be
\lim_{\xi\to\infty} \sup_{t\in [0,T^*]} |\Qb(t)-R |=0
\ee
\end{proposition}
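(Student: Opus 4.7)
The plan is to treat \eqref{eq:Qbinfty} as a regular perturbation of \eqref{eq:initialtime}. Writing
\[
\frac{d}{dt}\Qb = \Phi(\Qb) + \frac{1}{\xi}\Psi(\Qb),
\]
with $\Phi(Q) := [DQ+QD]+\tfrac{2}{3}D-2(Q+\tfrac{1}{3}\Id)\tr[QD]$ and $\Psi(Q):=[W,Q]-\partial F(Q)/\partial Q$, we have two polynomial, hence locally Lipschitz, vector fields, and the perturbation parameter $1/\xi$ tends to $0$. The scheme is standard: I will combine (i) a $\xi$-uniform a priori bound producing a common interval of existence $[0,T^*]$ on which both $\Qb$ and $R$ lie in a fixed ball, with (ii) a Gronwall estimate on the difference.

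For (i), take the Frobenius inner product of \eqref{eq:Qbinfty} with $\Qb$ and set $\sigma_\xi(t) := |\Qb(t)|^2$. Since $\tr([W,\Qb]\Qb)=0$, $\tr(\Qb)=0$, and the quartic term $-c\Qb|\Qb|^2/\xi$ contributes with a favorable sign, one obtains a Riccati-type inequality
\[
\dot\sigma_\xi \le C_0\bigl(1+\sigma_\xi+\sigma_\xi^{3/2}\bigr),
\]
with $C_0$ independent of $\xi$ for $\xi\ge 1$ (the super-linear growth comes from the cubic $\Phi$-contribution $-2|\Qb|^2\tr[\Qb D]$ and the indefinite $b\Qb^2$ part of $\Psi$, while the linear terms are absorbed). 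The scalar ODE $\dot y = C_0(1+y+y^{3/2})$ has a finite blow-up time $T_{\max}(y_0)$ depending only on $y_0$; any $T^* < T_{\max}(|Q_0|^2)$ therefore yields a uniform bound $\sigma_\xi(t) \le M(|Q_0|)$ on $[0,T^*]$, valid for every $\xi\ge 1$. The same comparison (with the $\xi^{-1}$ terms absent) bounds $|R(t)|^2$ by the same $M$ on $[0,T^*]$.

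For (ii), set $E_\xi := \Qb - R$, so that $E_\xi(0)=0$ and
\[
\frac{d}{dt}E_\xi = \Phi(\Qb)-\Phi(R) + \frac{1}{\xi}\Psi(\Qb).
\]
On the ball $\{|Q|\le \sqrt M\}$, polynomial Lipschitz bounds give constants $L=L(M)$ and $K=K(M)$ with $|\Phi(\Qb)-\Phi(R)|\le L|E_\xi|$ and $|\Psi(\Qb)|\le K$. Hence
\[
\frac{d}{dt}|E_\xi|^2 \le (2L+1)|E_\xi|^2 + \frac{K^2}{\xi^2},
\]
and Gronwall yields $\sup_{t\in[0,T^*]}|E_\xi(t)|^2 \le \tfrac{K^2}{\xi^2(2L+1)}\bigl(e^{(2L+1)T^*}-1\bigr)$, which tends to $0$ as $\xi\to\infty$.

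The main obstacle is guaranteeing that $T^*$ is genuinely independent of $\xi$. Since the dissipative quartic $-c|\Qb|^2\Qb/\xi$ weakens as $\xi\to\infty$, the a priori bound cannot exploit dissipation to tame the quadratic and cubic growth supplied by $\Phi$ (and by $b\Qb^2$). Everything therefore hinges on the Riccati estimate above and on choosing $T^*$ strictly below the blow-up time of the corresponding scalar comparison ODE; once this is in place, step (ii) is essentially automatic, because $\Phi$ and $\Psi$ are polynomial and all trajectories are confined to the common compact set $\{|Q|\le\sqrt M\}$.
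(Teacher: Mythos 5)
Your proposal is correct and follows essentially the same strategy as the paper: a $\xi$-uniform a priori bound (obtained by testing with $\Qb$, discarding the favorable $-c|\Qb|^4/\xi$ term, and comparing with a scalar ODE that blows up in finite time) yields a common existence interval $[0,T^*]$ independent of $\xi$, and then a Gronwall estimate on the difference $\Qb-R$, which vanishes initially, shows the $O(1/\xi)$ forcing drives the difference to zero uniformly on $[0,T^*]$. The only differences are cosmetic (the paper integrates after dividing by $1+|\cdot|^4$ to get an explicit tangent-type bound and keeps $|\Qb|,|R|$ inside the Gronwall exponential, whereas you use a comparison ODE and Lipschitz-on-a-ball constants), so no further changes are needed.
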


\begin{proof}

We start by deriving  a uniform local in time estimate for $R$.  We multiply \eqref{eq:initialtime} scalarly by $2R$ to get:

$$
\frac{d}{dt}|R|^2=4\tr (R^2D)+\frac 43 \tr (DR)-4\tr (RD)|R|^2
$$

Then we have:

$$
\frac{d}{dt}|R|^2\le C_1|R|^4+C_2|R|^2+C_3
$$ where $C_1,C_2,C_3>0$ are explicitly computable coefficients.

Dividing the last estimate  by $1+|R|^4$ we get:

$$
\frac{\frac{d}{dt}|R|^2}{1+|R|^4}\le C_1+\frac{C_2}{2}+C_3
$$

Integrating on $[0,t]$ we obtain:

$$ |R(t)|^2\le \frac{|R(0)|^2+\tan (C_4 t)}{1-|R(0)|^2\tan(C_4t)}
$$ where $C_4:=C_1+\frac{C_2}{2}+C_3$. The estimate is valid for $t<T^*_1$ depending on $R(0)$ and $C_4$.

Similarly we estimate $\Qb$. We multiply \eqref{eq:Qbinfty} scalarly by $2\Qb$ and obtain:

\be
\frac{d}{dt} |\Qb|^2=4\tr (\Qb^2 D)+\frac{4}{3}\tr(D\Qb)-4\tr(D\Qb)|\Qb|^2-\frac{2}{\xi}\left(a|\Qb|^2-b\tr (\Qb)^3+c|\Qb|^4\right)
\ee which implies

\bea
\frac{d}{dt} |\Qb|^2&\le 4|\Qb^2||D|+\frac{4}{3}|D||\Qb|-4\tr(D\Qb)|\Qb|^2-\frac{2}{\xi}\left(a|\Qb|^2-b\tr (\Qb)^3+c|\Qb|^4\right)\non\\
&\le 2|\Qb^2|^2+2|D|^2+\frac 23 |\Qb|^2+\frac 23 |D|^2+2(\tr(D\Qb))^2+2|\Qb|^4-\frac{2}{\xi}\left(a|\Qb|^2-b\tr (\Qb)^3+c|\Qb|^4\right)\non\\
&\le 4|\Qb|^4+\frac{2}{3}|\Qb|^2+\frac{11}{3}|D|^2+|D|^4-\frac{2}{\xi}\left(a|\Qb|^2-b\tr (\Qb)^3+c|\Qb|^4\right)\non\\
&\le (\tC_1+\frac{\tC_2}{\xi})|\Qb|^4+\frac{\tC_3}{\xi}|\Qb|^2+\tC_4
\eea where we used that $|\tr(AB)|\le |A||B|$ for any $3\times 3$ matrices, and $|M|^4=2|M^2|^2$ for a {\it Q-tensor}, with $\tC_1,\tC_2,\tC_3,\tC_4>0$ explicitly computable constants, where  $\tC_2,\tC_3$ depend on the coefficients $a,b$ and $c$.

Dividing the last estimate  by $1+|\Qb|^4$ we get:

\be
\frac{\frac{d}{dt}|\Qb|^2}{1+|\Qb|^4}\le \tC_1+\frac{1}{\xi}(\tC_2+\frac{\tC_3}{2})+\tC_4
\ee which integrating on $[0,t]$ gives:

\be
|\Qb|^2(t)\le \frac{|Q_0|^2+\tan (\tC_{5,\xi}t)}{1-|Q_0|^2\tan(C_{5\xi}t)}
\ee where $\tC_{5,\xi}=\tC_1+\frac{1}{\xi}(\tC_2+\frac{\tC_3}{2})+\tC_4$ and for $t<T^*_2$ with $T^*_2$ depending on $|\Qb (0)|$, $\tC_1,\tC_4$  but independent of $\xi$.

We now denote $\Rb:=\Qb-R$ where $R$ is a solution of the equation \eqref{eq:initialtime} with initial data $R(0)=\Qb(0)=Q_0$. We have that $\Rb$  satisfies the equation:

\bea
\frac{d}{dt}\Rb=[D\Rb+\Rb D]-& 2\Rb\tr (\Qb D)-\frac{2}{3}Id\tr (\Rb D)-2R\tr (\Rb D)\non\\
&+\frac{1}{\xi}\bigg([W,\Qb]-a\Qb+b(\Qb^2-\frac{1}{3} Id |\Qb|^2)-c\Qb|\Qb|^2\bigg)
\eea

Multiplying by $2\Rb$ and estimating similarly as before we obtain:

\be\label{eq:Rb}
\frac{d}{dt} |\Rb|^2\le \bC_1 (1+\frac{1}{\xi})|\Rb|^2+\bC_2 |\Rb|^2 |\Qb| +\bC_3 |\Rb|^2 |R|+\frac{\bC_4}{\xi}(1+|\Qb|^6)
\ee where $\bC_1,\bC_2,\bC_3,\bC_4>0$ are explicitly computable constants depending only on  $D$, $W$, $a,b,c$ but not on $\xi$.

Noting that  $\Qb(0)=Q_0=R(0)$ we have  $\Rb(0)=0$ and thus using Gronwall inequality we get out of   \eqref{eq:Rb} that: 

\be
|\Rb|^2(t)\le\frac{\bC_4}{\xi}\int_0^t (1+|\Qb|^6)(s)\exp (\int_s^t  \bC_1(1+\frac{1}{\xi})+\bC_2|\Qb|(\tau)+\bC_3  |R|(\tau) \,d\tau)\,ds
\ee

Thus, for $T<\min\{T_1^*,T_2^*\}$ we have

\be
\sup_{t\in [0,T]} |\Qb(t)-R(t)|\to 0, \textrm{ as }\xi\to\infty
\ee

\end{proof}

\subsection{Coordinates and the analysis of the short time regime}

In order to describing the initial behaviour more precisely it is convenient to revert to coordinate representation.
The  system describing the initial behaviour in coordinates is given by:

\begin{equation}\label{sysabc01}
\begin{array}{l}
\dot x=\displaystyle{\frac{2}{3}}(1-6x)z\\
\\
\dot y=\displaystyle{\frac{2}{3}}(1-6y)z\\
\\
\dot z=\displaystyle{\frac{2}{3}}+x+y-4 z^2.
\end{array}
\end{equation}

\begin{proposition}\label{propfirstint}
System \eqref{sysabc01} is integrable  in $\mathbb{R}^3\setminus \{x=1/6\}$  and the two independent first integrals are
\begin{equation}\label{fi}
\begin{array}{l}
\mathcal{H}_1(x,y,z):=\displaystyle{\frac{1-6y}{6 (1 - 6x)}}\\
\\
\mathcal{H}_2(x,y,z):=\displaystyle{\frac{1+6x+6y-12z^2}{36 (1 - 6x)^2}}.
\end{array}
\end{equation}
\end{proposition}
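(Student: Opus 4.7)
The plan is to verify directly that both $\mathcal{H}_1$ and $\mathcal{H}_2$ are constant along every orbit of \eqref{sysabc01} in the region $\{x\neq 1/6\}$, and then to check that their differentials are linearly independent on a generic open subset.

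To make the verification transparent rather than brute force, I would first introduce the auxiliary quantities
\[
\phi(t):=1-6x(t),\qquad \psi(t):=1-6y(t),\qquad u(t):=1+6x(t)+6y(t)-12z(t)^2.
\]
Substituting the first two equations of \eqref{sysabc01} immediately yields $\dot\phi=-4z\phi$ and $\dot\psi=-4z\psi$. Hence the logarithmic derivatives of $\phi$ and $\psi$ coincide, so $(\psi/\phi)^{\cdot}=0$ on $\{\phi\neq 0\}=\{x\neq 1/6\}$; up to the factor $1/6$ this is exactly $\mathcal{H}_1$. The pleasant observation is that no special structure beyond the proportionality of $\dot x/(1-6x)$ and $\dot y/(1-6y)$ is needed for this integral.

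For $\mathcal{H}_2$ the key step I would carry out is the computation
\[
\dot u = 6\dot x+6\dot y-24z\dot z
      = 4z\phi+4z\psi-24z\bigl(\tfrac{2}{3}+x+y-4z^2\bigr),
\]
which, after expanding and collecting terms, simplifies to $\dot u=-8zu$. Combined with $\dot{(\phi^{2})}=2\phi\dot\phi=-8z\phi^{2}$, this gives
\[
\frac{d}{dt}\!\left(\frac{u}{\phi^{2}}\right)=\frac{\dot u\,\phi^{2}-u\,\dot{(\phi^{2})}}{\phi^{4}}=0,
\]
and multiplying by $1/36$ recovers $\mathcal{H}_2$. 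The only non-routine point in this paragraph is the algebraic simplification that produces the factor $u$ on the right-hand side of $\dot u$; this is the place where the specific form of the coefficients in \eqref{sysabc01} (the $\tfrac{2}{3}$'s and the $-4z^{2}$) enters in a non-trivial way, and it is really the main obstacle of the proof.

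It remains to check that the two first integrals are functionally independent. Since $\partial_{z}\mathcal{H}_1\equiv 0$ while a short computation gives $\partial_{z}\mathcal{H}_2=-2z/\bigl(3(1-6x)^{2}\bigr)$, the gradients $\nabla\mathcal{H}_1$ and $\nabla\mathcal{H}_2$ are linearly independent on the open dense subset of $\{x\neq 1/6\}$ where $z\neq 0$, which is enough to conclude integrability in the usual sense. I would mention, as a concluding remark, that the invariant set $\{z=0\}$ is itself a union of equilibria and can be analysed separately if needed.
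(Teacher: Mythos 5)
Your proof is correct and, since the paper's own proof is just the phrase ``straightforward calculations'', it takes essentially the same direct-verification route, organized cleanly via $\phi=1-6x$, $\psi=1-6y$, $u=1+6x+6y-12z^2$ with $\dot\phi=-4z\phi$, $\dot\psi=-4z\psi$, $\dot u=-8zu$ (all of which check out), plus the independence check using $\partial_z\mathcal{H}_1=0$ and $\partial_z\mathcal{H}_2=-2z/\bigl(3(1-6x)^2\bigr)\neq 0$ for $z\neq 0$ together with $\partial_y\mathcal{H}_1\neq 0$. One caveat: your closing aside that $\{z=0\}$ is a union of equilibria is false --- on that plane $\dot z=\tfrac{2}{3}+x+y$, so it is not even invariant, and only the line $x+y=-2/3$, $z=0$ consists of equilibria (the line $r$ of the following proposition); this remark is not needed for the proposition, so it does not affect the validity of your argument.
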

\begin{proof}
The proof follows from straightforward calculations.
\end{proof}

\begin{proposition}\label{propequil} Singular points of system \eqref{sysabc01} are: 
every point belonging the straight line 
$$r:= \left\{x+y=-2/3,\ z=0\right\},$$ which is non-hyperbolic; and the hyperbolic ones 
\begin{equation*}
\begin{array}{l}
E_2:=\displaystyle{\left(\frac{1}{6},\frac{1}{6},-\frac{1}{2}\right)},~~
E_3:=\displaystyle{\left(\frac{1}{6},\frac{1}{6},\frac{1}{2}\right)},~~
\end{array}
\end{equation*}
where $E_2$ is a repelling node and $E_3$ is an attracting node. The planes $x+y+2z=-2/3$ and $x+y+2z=-2/3$ are invariant under the flow and intersect along $r$. Moreover, the global phase portrait of system \eqref{sysabc01} is topologically equivalent to the one represented in Figure \ref{figspace}.
\end{proposition}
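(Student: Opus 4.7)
The plan is to carry out the items in the statement in order: locate the equilibria, classify them via the Jacobian, verify invariance of the two planes, and finally assemble the global phase portrait from the first integrals of Proposition \ref{propfirstint} together with this invariant structure.

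To locate the equilibria, I would set the right-hand sides of \eqref{sysabc01} to zero. The first two equations factor as $(1-6x)z=0$ and $(1-6y)z=0$, so either $z=0$ — in which case $\dot z=0$ reduces to $x+y=-2/3$, giving the line $r$ — or $z\neq 0$ and $x=y=1/6$, in which case $\dot z=0$ forces $4z^2=1$, yielding $E_2$ and $E_3$. Classifying the isolated equilibria is then a direct Jacobian computation: at $E_3$ the linearization is lower triangular with diagonal $(-2,-2,-4)$, at $E_2$ lower triangular with diagonal $(2,2,4)$, so $E_3$ is an attracting node and $E_2$ a repelling node. On $r$ the diagonal of the Jacobian vanishes, and the characteristic polynomial simplifies to $-\lambda(\lambda^2-4)$ via the cancellation $\frac{2}{3}(1-6x)+\frac{2}{3}(5+6x)=4$; the eigenvalues $0,\pm 2$ show that every point of $r$ is non-hyperbolic, with its center direction tangent to $r$ itself.

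For the invariant planes (the statement surely intends $x+y+2z=-2/3$ and $x+y-2z=-2/3$, one of the two equations in the text being a typo) I would differentiate along the flow. Using the identities $\dot x+\dot y=\frac{4z}{3}[1-3(x+y)]$ and $2\dot z=\frac{4}{3}+2(x+y)-8z^2$, substituting $x+y=-2/3\mp 2z$ produces an exact cancellation showing that $x+y\pm 2z+2/3$ is preserved on the respective plane. The two planes intersect along $\{z=0,\ x+y=-2/3\}=r$.

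Finally, the global phase portrait follows by exploiting integrability. Each level set $\{\mathcal{H}_1=k\}$ is an invariant vertical plane $y=1/6-k+6kx$ on which $\mathcal{H}_2$ restricts to a conserved quantity, so every orbit is the intersection $\{\mathcal{H}_1=k_1,\ \mathcal{H}_2=k_2\}$; combined with the two tilted invariant planes identified above (which contain $E_2$, $E_3$ and the line $r$ and thus carry the heteroclinic skeleton) this determines the trajectories up to topological equivalence. I expect the main obstacle to be the qualitative bookkeeping required to match this algebraic description with Figure \ref{figspace}: identifying which level curves of $\mathcal{H}_2|_{\mathcal{H}_1=k}$ connect $E_2$ to $E_3$, which limit onto $r$, and clarifying the behaviour near the singular locus $\{x=1/6\}$ where $\mathcal{H}_1$ blows up. Once this case analysis is complete on each leaf, topological equivalence for the full portrait follows from the foliation.
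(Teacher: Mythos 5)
Your treatment of the first three claims is correct and essentially the paper's: the factorization $(1-6x)z=(1-6y)z=0$ yields exactly the line $r$ and the points $E_2,E_3$; the Jacobian is lower triangular at $E_2,E_3$ with diagonals $(2,2,4)$ and $(-2,-2,-4)$; on $r$ the characteristic polynomial is $-\lambda(\lambda^2-4)$ with kernel along $(1,-1,0)$; you rightly read the second plane as $x+y-2z=-2/3$ (a typo in the statement), and your tangency computation is equivalent to the paper's identity $\dot f=-2(2z\pm1)f$ for $f=x+y\mp2z+2/3$.

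The gap is in the final claim, topological equivalence with Figure \ref{figspace}, which is where the substance of the proposition lies. You propose exactly the paper's strategy (foliate by the invariant planes $\mathcal{H}_1=h$ and restrict $\mathcal{H}_2$ to each leaf) but stop where the proof actually has to be done, listing the ``qualitative bookkeeping'' as an expected obstacle rather than resolving it. The paper carries it out: substituting $y=1/6-h(1-6x)$ reduces \eqref{sysabc01} to the planar system \eqref{pp3}; for $h\neq-1/6$ this has exactly three hyperbolic singular points, the saddle $r_h$ (the trace of $r$) and the nodes $E_2,E_3$; the key device is that the level curve of $\left.\mathcal{H}_2\right|_h$ through $r_h$ factors as the product of the two straight lines $(5+6x-12z-6h(1-6x))(5+6x+12z-6h(1-6x))=0$, which are precisely the traces of the invariant planes $x+y\pm2z=-2/3$ and hence furnish the separatrices organizing each leaf; the cases $h<-1/6$, $h=-1/6$ (no $r_h$, three invariant lines) and $h>-1/6$ are distinguished; and the invariant plane $x=1/6$, which is not an $\mathcal{H}_1$-level set, is handled separately by observing that its restricted dynamics coincide with \eqref{pp3} at $h=0$ after exchanging $x$ and $y$. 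Without this leaf-by-leaf analysis, appealing to the two first integrals is not enough: an orbit is only \emph{contained in} an intersection $\{\mathcal{H}_1=k_1\}\cap\{\mathcal{H}_2=k_2\}$, which in general splits into several orbits and equilibria (the separatrix level is a pair of lines broken by singular points), the time orientation on each component still has to be determined from the vector field, and the whole plane $x=1/6$ escapes $\mathcal{H}_1$ altogether. So the proposal is sound up to and including the invariance of the planes, but the global phase portrait is asserted rather than proved.
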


\begin{proof}
The existence of the singularities and the local behaviour of the hyperbolic ones follow from straightforward calculations.

The invariance of the planes in the statement of the proposition follows by checking that
\[
 \dot{x}\dfrac {df}{dx} + \dot{y}\dfrac {df}{dy} + \dot{z}\dfrac {df}{dz}= -2(2z \pm 1) f(x,y,z)
\]
where $f(x,y,z)=x+y-2z+2/3$ and $f(x,y,z)=x+y+2z+2/3$, respectively.

Let we describe now the global phase portrait. To do that, we first describe the foliation induced by the level surfaces $I_h$ of the first integral $\mathcal{H}_1$, and then to analyze the dynamical behaviour of the restriction of system \eqref{sysabc01} to $I_h$, we use the restriction to $I_h$ of the other first integral $\mathcal{H}_2$. 

Note that the invariant surfaces $I_h:=\{(x,y,z): \mathcal{H}_1(x,y,z)=h\}_{h\in\mathbb{R}}$, are the planes given by 
\begin{equation}\label{invmani}
y=1/6-h(1-6x) 
\end{equation}
and $z\in\mathbb{R}$. Moreover, the plane given by $x=1/6$ and $z\in \mathbb{R}$ is also invariant under the flow of system \eqref{sysabc01}, even when it is not a level surface of $\mathcal{H}_1$. All these vertical planes intersect along the invariant straight line $x=1/6$, $y=1/6$ and $z\in \mathbb{R}$.

By replacing expression \eqref{invmani} into the third equation of \eqref{sysabc01} we obtain 
\begin{equation}\label{pp3}
\begin{array}{l}
\dot{x}=\displaystyle{\frac{2}{3}(1-6x)z}\\
\\
\dot{z}=\displaystyle{\frac{5}{6}+x-4z^2-h(1-6x)},
\end{array}
\end{equation}
which corresponds with the restricted differential system over the invariant manifold $I_h$. 

If $h\neq -1/6$, then it is easy to check that the system \eqref{pp3} has three hyperbolic singular points: one of them 
\begin{equation*}
{r}_h=\left(\dfrac {6h-5}{6(1+6h)},0\right),
\end{equation*}
corresponding with the intersection of the straight line  $r$ and the plane $I_h$; and the other two corresponding with $E_2$ and $E_3$, which will be referenced in the same way. Linear analysis around the singularities assures that $r_h$ is a saddle point, $E_2$ is a repelling node and $E_3$ is an attracting node. 

For the globaly description of the phase portrait of system \eqref{pp3} we resort to the restriction of the first integral $\mathcal{H}_2$ to the invariant plane $I_h$, 
\begin{equation*}
\left. \mathcal{H}_2 \right|_h= \dfrac {1+3x+6z^2-3h(1-6x)}{18(1-6x)^2}.
\end{equation*}
Hence, the level curves of $\left. \mathcal{H}_2 \right|_h$ containing the singular point $r_h$, that is $\left. \mathcal{H}_2 \right|_h(x,z)=\left. \mathcal{H}_2 \right|_h(r_h)$, are the two invariant straight lines
\begin{equation*}
 (5+6x-12z-6h(1-6x))(5+6x+12z-6h(1-6x))=0.
\end{equation*}
Notice that the first of these lines contains the attracting node $E_2$ whereas the second one containes the repelling node $E_3$. From this, we conclude that the global phase portrait is topological equivalent to the one depicted in Figure \ref{figplano}(a) when $h<-1/6$ or in Figure \ref{figplano}(c) when $h>-1/6$.

On the other hand, if $h=-1/6$, then system \eqref{figplano} exhibits only the singularities which correspond with $E_2$ and $E_3$, having both of them the same local behaviour as before. Moreover, the straight lines 
\[
 x=\dfrac 1 6,\ z=\dfrac 1 2,\ \text{and}\ z=-\dfrac 1 2,
\]
are invariant under the flow. We conclude that the global phase portrait is topological equivalent to the one depicted in Figure \ref{figplano}(b).

\begin{figure}[hb]
\centering
\begin{center}
\includegraphics{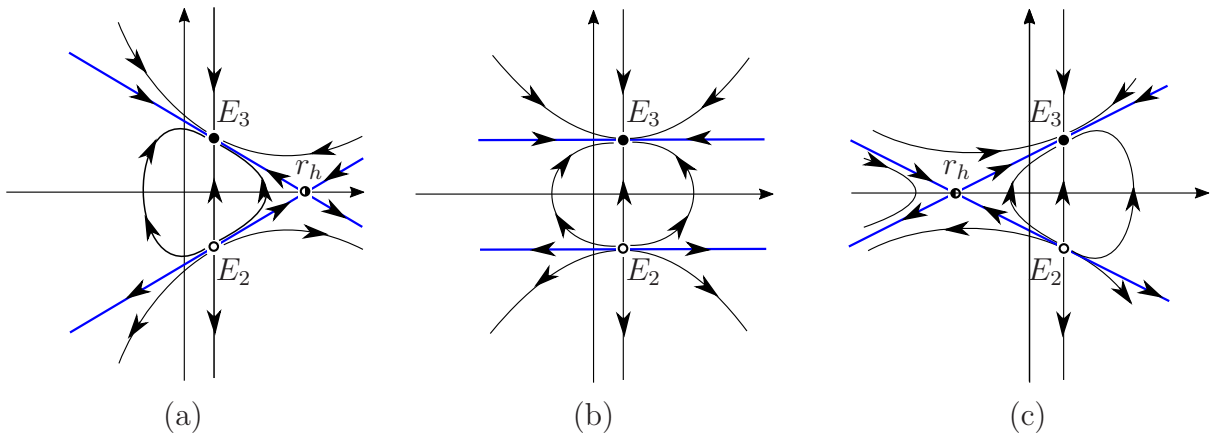}\vspace{0.35cm}
\begin{picture}(0,0)
\put(-400,-15){(a)}
\put(-380,40){$E_2$}
\put(-380,100){$E_3$}
\put(-350,80){$r_h$}
\put(-245,-15){(b)}
\put(-225,40){$E_2$}
\put(-225,100){$E_3$}
\put(-80,-15){(c)}
\put(-73,40){$E_2$}
\put(-73,100){$E_3$}
\put(-105,80){$r_h$}
\end{picture}%\vspace{-0.3cm}
\caption{Schematic representation of the flow of system \eqref{pp3} in the $xz$ plane, vertical invariant line at $x=1/6$: (a) when $h<-1/6$; (b) when $h=-1/6$; and (c) when $h>-1/6$. Blue orbits correspond with the separatrices.}\label{figplano}
\end{center}
\end{figure}

To finish describing the global behaviour of system \eqref{sysabc01}, it remains to  describe the flow over the invariant plane $x=1/6$. This can be done just by notting than the restriction of the system \eqref{sysabc01} to the plane $x=1/6$ coincides with the system \eqref{pp3}, by taking $h=0$ and performing the change of variable $x \to y$. Therefore, the flow over the plane $x=1/6$ is topological equivalent to the one depicted in Figure \ref{figplano}(c).
\end{proof}

\begin{figure}[ht]
\begin{center}
\includegraphics{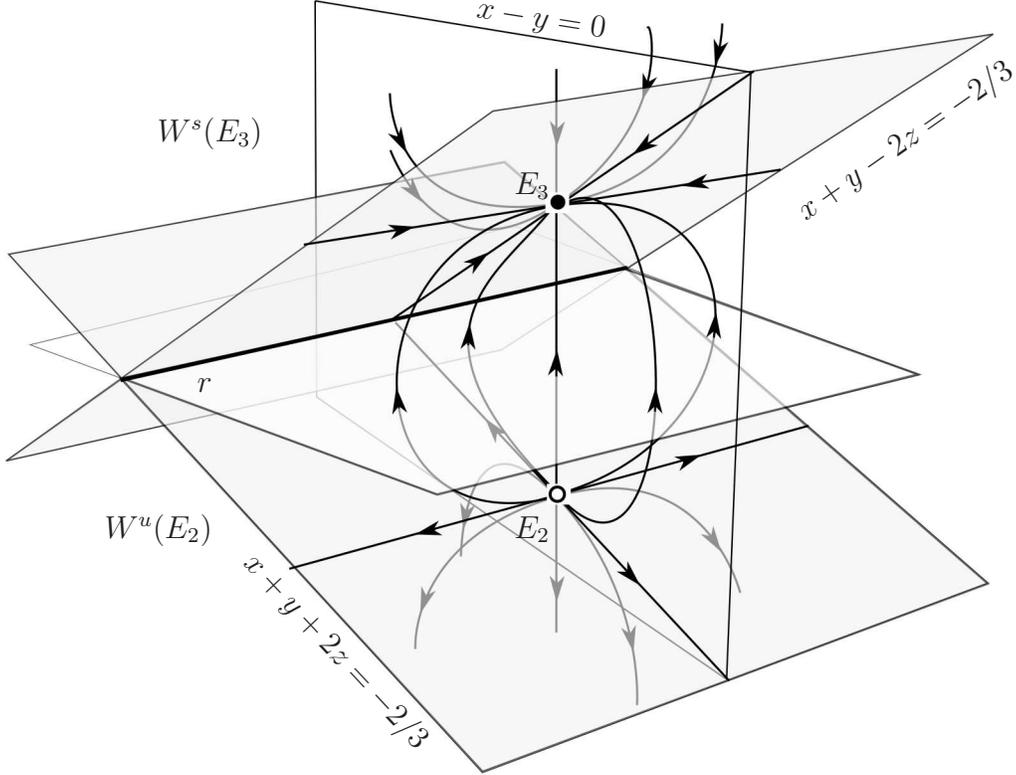}\vspace{0.35cm}
\begin{picture}(0,0)
\put(-305,145){$r$}
\put(-80,210){\rotatebox{35}{$x+y-2z=-2/3$}}
\put(-290,80){\rotatebox{-47}{$x+y+2z=-2/3$}}
\put(-200,287){\rotatebox{-9}{$x-y=0$}}
\put(-185,220){$E_3$}
\put(-320,240){$W^s(E_3)$}
\put(-340,90){$W^u(E_2)$}
\put(-185,90){$E_2$}
\end{picture}
\caption{Global phase portrait of system \eqref{sysabc01}. The thick line corresponds with the continuum of non-hyperbolic singular points $r$, the non-filled circle corresponds with the repelling node $E_2$ and the filled one with the attracting node $E_3$. The gray planes correspond with the invariant surface $x+y+2z=-2/3$, containing $E_2$ and the invariant surface $x+y+2z=-2/3$, containing $E_3$. The first of these planes is the boundary of the stable manifold of $E_3$, denoted by $W^s(E_3)$, whereas the second plane is the boundary of the unstable manifold of $E_2$, denoted by $W^u(E_2)$. We also represent the planes $z=0$ and $x-y=0$. These planes have not dynamical meaning, the second one is included just to justify that both half-spaces $x-y<0$ and $x-y>0$ are invariant under the flow, see Remark \ref{rem:invmani}.}\label{figspace}
\end{center}
\end{figure}

\begin{remark}\label{rem:invmani}
The plane $x-y=0$ is invariant under the flow of system  \eqref{sysabc01} since it corresponds with the level surface $I_{1/6}$, see \eqref{invmani}. Therefore, both half-spaces $x-y<0$ and $x-y>0$ are also invariant under the flow of system \eqref{sysabc01}.
\end{remark}

\subsection{The long time regime}
We have first an uniform estimate for $|\Qb|$ which uses in an essential manner the fact that the material constant $c$ appearing in \eqref{def:partialFQ} is {\it positive}.

\begin{lemma}
Let $\Qb$ be a solution of \eqref{eq:Qbinfty}. There exists $\tilde C, \xi_0>0$ depending on the material constants $a,b,c$  but independent of $\xi$, such that:

\be\label{est:uniformsmallbeta}
|\Qb|(t)\le |\Qb|(0)+ \tilde C
\ee for all $t\ge 0$ and $\xi<\xi_0$.

\end{lemma}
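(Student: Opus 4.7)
The natural strategy is to derive a closed differential inequality for the scalar $\alpha(t):=|\Qb(t)|^2$ by multiplying \eqref{eq:Qbinfty} scalarly by $2\Qb$ and taking traces, and then to extract from it an absorbing set whose radius depends only on $a,b,c$ and $|D|$. Two structural ingredients do the work. First, because $W$ is skew-symmetric and $\Qb$ symmetric, the rotational term disappears from the energy balance: $\tr\!\bigl([W,\Qb]\Qb\bigr)=0$. Second, since $c>0$, the free-energy contribution supplies the strongly dissipative quartic term $-\frac{2c}{\xi}\alpha^{2}$, which is the engine of the whole estimate.

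Combining this with the elementary bounds $|\tr(D\Qb^{2})|\le|D|\alpha$, $|\tr(\Qb D)|\le|D|\alpha^{1/2}$ and $|\tr(\Qb^{3})|\le\alpha^{3/2}$ (the last from the eigenvalue inequality $\sum|\lambda_i|^{3}\le(\sum\lambda_i^{2})^{3/2}$), one arrives at an inequality of the form
\begin{equation*}
\alpha'(t)\le -\frac{2c}{\xi}\alpha^{2}+\frac{2|b|}{\xi}\alpha^{3/2}+\frac{2|a|}{\xi}\alpha+C(|D|)\bigl(\alpha^{3/2}+\alpha+\alpha^{1/2}\bigr).
\end{equation*}
I then aim to produce a threshold $M$ depending only on $a,b,c,|D|$ (hence independent of $\xi$) such that $\alpha\ge M$ forces $\alpha'<0$ whenever $\xi<\xi_0$. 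A standard barrier argument then gives $\alpha(t)\le\max\{\alpha(0),M\}$ for all $t\ge 0$, so that $|\Qb|(t)\le|\Qb|(0)+\sqrt{M}$ and it suffices to set $\tilde C:=\sqrt{M}$.

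To identify such a threshold I would group the $1/\xi$ contributions as
\begin{equation*}
\frac{\alpha^{3/2}}{\xi}\!\left[-2c\,\alpha^{1/2}+2|b|+\frac{2|a|}{\alpha^{1/2}}\right]
\end{equation*}
and choose $\alpha^{1/2}$ larger than a constant depending only on $|a|,|b|,c$ so that the bracket becomes strictly negative, say $\le -K<0$. Then the whole $1/\xi$ part is bounded above by $-K\alpha^{3/2}/\xi$, while the remaining $\xi$-independent terms grow at most as $\alpha^{3/2}$ with a fixed coefficient $C(|D|)$. Choosing $\xi_0$ so that $K/\xi_{0}>C(|D|)$ then makes the right-hand side strictly negative beyond the threshold, and combining the two conditions on $\alpha^{1/2}$ yields the desired $\xi$-independent $M$.

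The main obstacle is precisely the cubic term $\frac{2b}{\xi}\tr(\Qb^{3})$: it is $\xi$-singular yet of lower order than the quartic dissipation, so a direct Young inequality would absorb it into $-\frac{2c}{\xi}\alpha^{2}$ only at the cost of introducing further $1/\xi$ factors in the quadratic or linear terms, which would destroy any hope of a $\xi$-independent threshold. The factorisation above circumvents this by treating the whole $1/\xi$ contribution as a single polynomial in $\alpha^{1/2}$, and uses in an essential way the positivity of $c$ to make its leading coefficient negative, in agreement with the emphasis placed on the hypothesis $c>0$ in the statement.
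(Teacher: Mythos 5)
Your argument is correct, and it rests on the same two pillars as the paper's proof: multiplying \eqref{eq:Qbinfty} scalarly by $2\Qb$, using the skew-symmetry of $W$ so that $\tr([W,\Qb]\Qb)=0$, and exploiting the quartic dissipation $-\frac{2c}{\xi}|\Qb|^4$ supplied by $c>0$ to dominate both the $\frac{1}{\xi}$ lower-order potential terms and the $O(1)$ flow-alignment terms once $\xi<\xi_0$. Where you diverge is only in how the scalar inequality for $\alpha=|\Qb|^2$ is closed: you factor the $\frac{1}{\xi}$ contribution as a polynomial in $\alpha^{1/2}$, extract a $\xi$-independent threshold $M$ beyond which $\alpha'<0$ for all $\xi<\xi_0$, and conclude by a barrier argument that $\alpha(t)\le\max\{\alpha(0),M\}$, whence \eqref{est:uniformsmallbeta} with $\tilde C=\sqrt{M}$. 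The paper instead applies Young's inequality to reduce the estimate to the linear form $\alpha'\le C_2+\frac{C_5}{\xi}-\frac{C_6}{\xi}\alpha$ and integrates against the factor $e^{C_6t/\xi}$, which gives the same uniform bound together with an explicit exponential rate of entry into the absorbing ball. Incidentally, your stated worry that a direct Young inequality would ``destroy any hope of a $\xi$-independent threshold'' is unfounded: the extra $\frac{1}{\xi}$ factors it creates in the quadratic and linear terms are harmless precisely because the surviving dissipation is also of order $\frac{1}{\xi}$, so the resulting asymptotic bound $(C_2\xi+C_5)/C_6$ is still $\xi$-independent --- this is exactly the paper's route, so your factorisation is a valid alternative rather than a necessity. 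One cosmetic remark: your constants also depend on $|D|$, but since $D$ is the fixed matrix in \eqref{mat} with $|D|=\sqrt{2}$, this is consistent with the statement's ``depending on the material constants $a,b,c$''; and note that your a priori bound also guarantees the global-in-time existence that the statement implicitly uses.
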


\begin{proof}
We multiply \eqref{eq:Qbinfty} by $2\Qb$ and we obtain the estimate:

\bea
\frac{d}{dt}|\Qb|^2 &\le C_1 |\Qb|^2+C_2+\frac{C_3}{\xi}|\Qb|^2-\frac{C_4}{\xi} |\Qb|^4\non\\
&\le C_2+\frac{C_5}{\xi}-\frac{C_6}{\xi}|\Qb|^2\non
\eea for some $\xi<\xi_0$ and positive explicitly computable constants $C_i>0, i=1,\dots, 6$ depending on $a,b,c$ and $\xi_0$ but not on $\xi$.

Multiplying the last inequality by $e^{\frac{C_6t}{\xi}}$ and integrating on $[0,t]$ we get:

$$
\displaystyle{|\Qb|^2(t)\le |\Qb|^2(0)e^{-\frac{C_6t}{\xi}}+\left(C_2+\frac{C_5}{\xi}\right) \frac{\xi}{C_6}\left(1-e^{-\frac{C_6t}{\xi}}\right)}
$$ out of which we obtain the claimed estimate 	\eqref{est:uniformsmallbeta}.

\end{proof}

\bigskip We show now that the long-time behaviour of $Q$ described by the limit on finite time intervals of $\Qb$ as $\xi\to 0$ is provided by solutions of \eqref{eq:stationaryflow}.

\begin{lemma}
Let $\Qb$ be a solution of \eqref{eq:Qbinfty}. We have:

\be\label{eq:limitWF}
\lim_{\xi\to 0} \int_0^t |[W,\Qb]-\frac{\partial F}{\partial Q}(\Qb)|^2(s)\,ds=0
\ee
\end{lemma}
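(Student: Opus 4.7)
The plan is to derive two complementary energy identities by testing the equation against $\partial F/\partial Q(\Qb)$ and against $[W,\Qb]$ separately, exploiting a pointwise orthogonality between the two components of $G(\Qb):=[W,\Qb]-\partial F/\partial Q(\Qb)$. The key algebraic observation is that $[W,\Qb]:\partial F/\partial Q(\Qb)=0$ pointwise in $t$. Indeed, $[W,\Qb]$ is symmetric (commutator of antisymmetric $W$ with symmetric $\Qb$), while $\partial F/\partial Q(\Qb)=a\Qb-b(\Qb^2-\tfrac{1}{3}|\Qb|^2 Id)+c\Qb|\Qb|^2$ is a polynomial in $\Qb$ and hence commutes with $\Qb$. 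The cyclic trace identity $\tr([A,B]C)=\tr(A[B,C])$ with $A=W$, $B=\Qb$, $C=\partial F/\partial Q(\Qb)$ gives the orthogonality, yielding the Frobenius--Pythagoras decomposition $|G(\Qb)|^2=|[W,\Qb]|^2+|\partial F/\partial Q(\Qb)|^2$. Thus the problem reduces to showing $\int_0^t|[W,\Qb]|^2\,ds\to 0$ and $\int_0^t|\partial F/\partial Q(\Qb)|^2\,ds\to 0$ separately.

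For the $\partial F/\partial Q$ piece I would take the scalar product of \eqref{eq:Qbinfty} with $\partial F/\partial Q(\Qb)$, use $\tr\!\left(\frac{d\Qb}{dt}\,\partial F/\partial Q(\Qb)\right)=\frac{d}{dt}F(\Qb)$ on the left, and the orthogonality on the right, to get
$$\frac{d}{dt}F(\Qb)+\frac{1}{\xi}|\partial F/\partial Q(\Qb)|^2=\mathcal{F}(\Qb,D):\partial F/\partial Q(\Qb),$$
where $\mathcal{F}(\Qb,D):=[D\Qb+\Qb D]+\tfrac{2}{3}D-2(\Qb+\tfrac{1}{3}Id)\tr[\Qb D]$ collects the bounded flow terms. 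Integrating on $[0,t]$ and invoking the uniform $L^\infty$-bound on $|\Qb|$ from the previous lemma (which makes $|F|$, $|\partial F/\partial Q|$ and $|\mathcal{F}|$ uniformly bounded in $\xi$ on trajectories), the right-hand side is controlled by a constant $C(t)$, giving $\int_0^t|\partial F/\partial Q(\Qb)|^2\,ds\le C(t)\xi\to 0$.

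For the $[W,\Qb]$ piece I would differentiate $|[W,\Qb]|^2$ and substitute from the equation. Using $X:[W,X]=0$ for any symmetric $X$ (as $\tr(WS)=0$ for symmetric $S$) together with the identity $[W,\partial F/\partial Q(\Qb)]=(a+c|\Qb|^2)[W,\Qb]-b\{[W,\Qb],\Qb\}$, and computing $X:\{X,\Qb\}=2\tr(\Qb X^2)$ by cyclicity, one obtains
$$\frac{d}{dt}|[W,\Qb]|^2+\frac{2}{\xi}\Big[(a+c|\Qb|^2)|[W,\Qb]|^2-2b\,\tr(\Qb[W,\Qb]^2)\Big]=2[W,\Qb]:[W,\mathcal{F}(\Qb,D)].$$
After integration, the right-hand side is bounded uniformly in $\xi$. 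To close the estimate I would use $|\tr(\Qb[W,\Qb]^2)|\le|\Qb|\cdot|[W,\Qb]|^2$ and exploit $c>0$ together with the uniform bound on $|\Qb|$ to make the bracket coercive in $|[W,\Qb]|^2$, giving $\int_0^t|[W,\Qb]|^2\,ds\le C(t)\xi\to 0$. Adding the two bounds and using the Pythagorean decomposition concludes.

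The main obstacle is the coercivity needed in the last step: the bracket $(a+c|\Qb|^2)|[W,\Qb]|^2-2b\,\tr(\Qb[W,\Qb]^2)$ rewrites, after completion of the square, as $\bigl(c(|\Qb|-|b|/c)^2+a-b^2/c\bigr)|[W,\Qb]|^2$, which is non-negative when $|\Qb|$ is large but may fail to be so for small $|\Qb|$ (since the paper's parameter range $b^2>24ac$ allows $a<b^2/c$). A localization in $|\Qb|$ or a direct Gronwall-type handling of the small-$|\Qb|$ regime will be needed; this is precisely the same mechanism (positivity of $c$ combined with the uniform boundedness of $|\Qb|$) that already underlies the preceding lemma's $L^\infty$-estimate, so one expects it to carry through here as well.
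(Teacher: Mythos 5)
Your two structural observations are correct and worth keeping: $[W,\Qb]$ and $\frac{\partial F}{\partial Q}(\Qb)$ are Frobenius--orthogonal (since $\frac{\partial F}{\partial Q}(\Qb)$ commutes with $\Qb$), so \eqref{eq:limitWF} splits into the two claims $\int_0^t|\frac{\partial F}{\partial Q}(\Qb)|^2\,ds\to0$ and $\int_0^t|[W,\Qb]|^2\,ds\to0$. Your first half is proved correctly: testing \eqref{eq:Qbinfty} with $\frac{\partial F}{\partial Q}(\Qb)$, using the orthogonality and the uniform bound \eqref{est:uniformsmallbeta}, gives $\int_0^t|\frac{\partial F}{\partial Q}(\Qb)|^2\,ds\le C(t)\,\xi$. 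This is in substance the paper's own energy argument: the paper tests with $\frac{\partial F}{\partial Q}(\Qb)-[W,\Qb]$ and writes the left-hand side as $\partial_t F(\Qb)$, a step which tacitly assumes $\frac{d}{dt}\Qb:[W,\Qb]=0$, whereas the equation gives $\frac{d}{dt}\Qb:[W,\Qb]=\frac{1}{\xi}|[W,\Qb]|^2$ plus bounded flow terms; your decomposition makes explicit that this computation genuinely controls only the $\frac{\partial F}{\partial Q}$ component.

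The genuine gap is the $[W,\Qb]$ component, and the coercivity you flag at the end cannot be repaired. The dissipation bracket in your second identity is exactly $[W,\Qb]:[W,\frac{\partial F}{\partial Q}(\Qb)]$, which vanishes identically whenever $\frac{\partial F}{\partial Q}(\Qb)=0$, i.e.\ precisely at the uniaxial states $s_\pm\left(n\otimes n-\frac13 Id\right)$ towards which your first estimate shows the trajectory is driven; for $n$ in the shear plane one has $|[W,\Qb]|^2>0$ there while the bracket is zero, so no inequality of the form $\mathrm{bracket}\ge\kappa|[W,\Qb]|^2$ with $\kappa>0$ holds along the trajectory, and localizing in $|\Qb|$ cannot help because the degeneracy occurs at the fixed modulus $|\Qb|=\sqrt{2/3}\,|s_\pm|$, not at small $|\Qb|$. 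The obstruction is structural, not technical: $\int_0^t|[W,\Qb]|^2\,ds\to0$ asserts that the time average of $|[W,Q]|^2$ over the long interval $[0,t/\xi]$ vanishes, i.e.\ that director rotation dies out; but for small $\xi$ the dynamics is a perturbation of the co-rotational flow of Section~\ref{sec:corotational}, whose generic solutions approach rotating uniaxial states with $\frac{\partial F}{\partial Q}\approx0$ and $|[W,Q]|$ of order one (this is also what the paper's simulation for $\xi=0.5$ displays), so this half cannot follow from a pointwise Gronwall-type inequality of the kind you set up. In short, your proposal rigorously yields only $\lim_{\xi\to0}\int_0^t|\frac{\partial F}{\partial Q}(\Qb)|^2(s)\,ds=0$ --- which is also all that survives of the paper's computation once the discarded term $\frac{d}{dt}\Qb:[W,\Qb]$ is accounted for --- and the $[W,\Qb]$ part of \eqref{eq:limitWF} remains unproved.
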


\begin{proof}
We multiply the equation \eqref{eq:Qbinfty} by $\frac{\partial F}{\partial Q}(\Qb)-[W,\Qb]$ and obtain:

\bea
\partial_t F(\Qb)&=-\frac{1}{\xi}|\frac{\partial F}{\partial Q}(\Qb)-[W,\Qb]|^2\non\\
&+\left(\frac{\partial F}{\partial Q}(\Qb)-[W,\Qb]\right)\left([D\Qb+\Qb D]+\frac{2}{3}D-2\left(\Qb+\frac{1}{3}Id\right)\mathrm{tr}[\Qb D]\right)\non\\
&\le -\frac{1}{2\xi}|\frac{\partial F}{\partial Q}(\Qb)-[W,\Qb]|^2+\tilde C\xi
\eea for $\xi<\xi_0$ (with $\xi_0$ depending on the time $t$ and the initial data), with $\tilde C$ independent of $\xi_0$,  where for the last inequality we used the estimate \eqref{est:uniformsmallbeta}. Integrating the last inequality and using the fact that $F(\Qb)$ is bounded for bounded $\Qb$ we obtain the claimed relation \eqref{eq:limitWF}.

\end{proof}

We continue by identifying the solutions of the limiting equation \eqref{eq:stationaryflow}:

\begin{lemma}
For the matrix $W$ as in \eqref{mat} and $\frac{\partial F}{\partial Q}(Q) $ as in \eqref{def:partialFQ} we have

$$[W,Q]-\frac{\partial F}{\partial Q}(Q)=0$$ if and only if

\be
Q\in \{ 0, s_\pm (n\otimes n-\frac 13 Id), i=1,2,3, n\in\mathbb{S}^2\}
\ee with $\displaystyle{s_\pm:=\frac{b\pm \sqrt{b^2-24ac}}{4c}}$.

\end{lemma}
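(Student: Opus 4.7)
The plan is to work in a spectral basis of $Q$ and exploit an algebraic dichotomy: $\frac{\partial F}{\partial Q}(Q)$ is a polynomial in $Q$, hence diagonalized by any $R\in O(3)$ that diagonalizes $Q$, whereas $[W,Q]$ has vanishing diagonal in the same basis. Matching the two expressions will force each side to vanish separately, after which the kernel of $W$ will pin down the admissible eigenframe.

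First I would check that both $[W,Q]$ and $\frac{\partial F}{\partial Q}(Q)$ live in $\sS_0^{(3)}$: $(WQ-QW)^t = -QW+WQ$ using $W^t=-W$ and $Q^t=Q$, and cyclicity gives vanishing trace. Next, writing $Q=R\Lambda R^t$ with $\Lambda=\diag(\lambda_1,\lambda_2,\lambda_3)$, $\sum\lambda_i=0$, and setting $\tilde W := R^t W R$ (still antisymmetric), a direct computation gives
$$
\bigl(R^t[W,Q]R\bigr)_{ij} = [\tilde W,\Lambda]_{ij} = (\lambda_j-\lambda_i)\tilde W_{ij},
$$
while
$$
\bigl(R^t\tfrac{\partial F}{\partial Q}(Q)R\bigr)_{ij} = \delta_{ij} f(\lambda_i),\qquad f(\lambda):=a\lambda-b\lambda^2+\tfrac{b}{3}|\Lambda|^2+c\lambda|\Lambda|^2.
$$
Matching diagonal entries then forces $f(\lambda_i)=0$ for $i=1,2,3$, i.e.\ $\frac{\partial F}{\partial Q}(Q)=0$, and the standard classification of critical points of $F$ (cf.\ \cite{maj}) yields $Q=0$ or $Q=s_\pm(n\otimes n-\tfrac{1}{3}Id)$ for some $n\in\mathbb{S}^2$, with $s_\pm$ as in the statement. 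Matching off-diagonal entries yields the supplementary constraint $(\lambda_j-\lambda_i)\tilde W_{ij}=0$ whenever $i\neq j$.

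The remaining step, which I expect to be the main obstacle, is to use this supplementary constraint to pin down $n$. In the uniaxial case the simple eigenvalue $2s/3$ has eigenvector $n$ and the double eigenvalue $-s/3$ has eigenspace $n^\perp$, so the off-diagonal condition forces $W$ to respect this splitting, giving $Wn\in\mathrm{span}\{n\}$; antisymmetry of $W$ then yields $\langle n,Wn\rangle=0$, so $Wn=0$, and since $\ker W=\mathbb{R}e_3$ we conclude $n=\pm e_3$. The converse is immediate from $We_3=0$: one computes $[W,n\otimes n]=(Wn)\otimes n+n\otimes(Wn)=0$, and $W$ commutes with $Id$, so $[W,Q]=0=\frac{\partial F}{\partial Q}(Q)$ on the listed set. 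I would also flag that a literal reading of $n\in\mathbb{S}^2$ in the statement is too generous: a direct check with, e.g., $n=e_1$ gives $[W,Q]\neq 0$ while $\frac{\partial F}{\partial Q}(Q)=0$, so the ``$\Leftarrow$'' direction of the lemma in fact restricts $n$ to $\ker W$.
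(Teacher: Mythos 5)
Your proof is correct, and it actually reaches a sharper conclusion than the paper's own argument. Both proofs conjugate by a rotation diagonalizing $Q$ and exploit that $\frac{\partial F}{\partial Q}(Q)$ is diagonal in that basis while $[W,Q]$ has vanishing diagonal, but the logic is organized differently. The paper works from the off-diagonal entries first: writing them in terms of the $2\times 2$ minors $\alpha,\xi,\gamma$ of the diagonalizing rotation, it runs a case analysis and excludes $\alpha=\xi=\gamma=0$ by a determinant argument, concluding that $Q$ must be uniaxial; the diagonal entries then give $x(a+bx+6cx^2)=0$, i.e. $s\in\{0,s_\pm\}$. It stops there: it neither pins down the director nor addresses the converse, so it really establishes only the containment of the solution set in $\{0,\,s_\pm(n\otimes n-\frac13 Id)\}$, which is all that is needed for its later use (identifying candidate long-time limits). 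Your route instead reads off from the diagonal entries alone that $\frac{\partial F}{\partial Q}(Q)=0$, hence also $[W,Q]=0$, so the equation decouples into two separate vanishing conditions; the eigenvalue classification is then delegated to the known critical-point structure of $F$ (\cite{maj}, which the paper itself invokes in Section 2), and the off-diagonal condition is used to force $Wn=0$, i.e. $n=\pm e_3$ since $\ker W=\mathbb{R}e_3$. This buys you the converse direction and the correct observation that the lemma's ``if and only if'' with arbitrary $n\in\mathbb{S}^2$ is too generous: for $n=e_1$ one has $\frac{\partial F}{\partial Q}(Q)=0$ but $[W,Q]=-s_\pm(e_1\otimes e_2+e_2\otimes e_1)\neq 0$. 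So your flag is a genuine (minor) correction to the statement as written, not a gap in your argument, and your avoidance of the minors/determinant case analysis makes the proof somewhat cleaner than the paper's.
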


\begin{proof}
For a given tensor $Q$ we take $\mathcal{R}[Q]\in SO(3)$ to be such that $\mathcal{R}[Q] Q \mathcal{R}^t[Q]$ is a diagonal matrix, say

\be
\mcD:=\mathcal{R}[Q] Q \mathcal{R}^t[Q]=\left(\begin{array}{lll} x & 0 & 0 \\ 0 & y & 0 \\ 0 & 0 & -x-y\end{array} \right)
\ee with $x,y,-x-y$ being eigenvalues of $Q$.

 We explicitate

$$\mathcal{R}[Q]:=\left(\begin{array}{lll} a_{11} & a_{12} & a_{13} \\ a_{21} & a_{22} & a_{23}\\
a_{31} & a_{32} & a_{33}\end{array}\right)$$ (note that the coefficients depend on the matrix $Q$). Then, denoting

\be
\alpha:=a_{11}a_{22}-a_{12}a_{21},\, \xi:=a_{11}a_{32}-a_{12}a_{31},\, \gamma:=a_{21}a_{32}-a_{22}a_{31}
\ee we have:
\bea
\mathcal{R}[Q] W \mathcal{R}^t[Q]&=\left(\begin{array}{lll} 0 & \alpha & \xi \\
-\alpha & 0 & \gamma \\ -\xi & -\gamma & 0 \end{array}\right),\non\\
\, \mathcal{R}[Q] (WQ-QW) \mathcal{R}^t[Q]&=
\left(\begin{array}{lll} 0 & \alpha(y-x) & -\xi (2x+y) \\    \alpha(y-x) & 0  & -\gamma(2y+x) \non\\
-\xi(2x+y) & -\gamma(2y+x) & 0   \end{array}\right).
\eea

Furthermore we note that the {\it non-diagonal} terms of $\mathcal{R}[Q]([W,Q]-\partial F[Q])\mathcal{R}^t[Q]$ are zero if and only if

$$
\alpha(y-x)=\xi(2x+y)=\gamma(2y+x)
$$ which gives the following possibilities:
\bigskip
\begin{enumerate}
\item $x=y=0$,
\item $x=y\not=0$ and $\xi=\gamma=0$,
\item $x\not=y$, $2x+y=0$, $2y+x\not=0$ and $\alpha=\gamma=0$,
\item $x\not=y$, $2x+y\not=0$, $2y+x=0$ and $\alpha=\xi=0$,
\item $x\not=y$,  $2x+y\not=0$, $2y+x\not=0$  and $\alpha=\xi=\gamma=0$.
\end{enumerate}

Let us not however that the last case cannot happen. Indeed, if  $\alpha=\xi=\gamma=0$, then $\det \mathcal{R}[Q]=a_{13}\gamma-a_{23}\xi+a_{33}\alpha=0$ which cannot happen because $ \mathcal{R}[Q]\in SO(3)$.

Thus, the remaining cases all imply that $Q$ is uniaxial, i.e. it has two equal eigenvalues. We can assume without loss of generality that $x=y$ hence $\mcD=\textrm{diag}(x,x,-2x)$ (if not there exists a rotation $\tilde\mcR$ such that $\tilde\mcR \mcD\tilde\mcR^t$ is of this form).  Then the {\it diagonal} terms of $\mathcal{R}[Q]([W,Q]-\frac{\partial F}{\partial Q}(Q))\mathcal{R}^t[Q]$ are multiple of $x(a+bx+6cx^2)$ hence they are zero if and only if $\displaystyle{x\in\left\{0, -\frac{b+\sqrt{b^2-24ac}}{12c},  -\frac{b-\sqrt{b^2-24ac}}{12c}\right\}}$.
\end{proof}

\section{Physical variables: the degree order parameters $S_1,S_2$ and the angle $\theta$ of the director}
\label{sec:physics}

The main physical characteristic of nematic liquid crystal is the local preferred orientation of the rod-like molecules. The most comprehensive modelling of this characteristic is through a probability measure $\mu(x,\cdot)$ on the unit sphere $\mathbb{S}^2$ at each point $x$ in the three dimensional container containg liquid crystal material (see \cite{dgbook,newtonmottram}). Then for $A\subset\mathbb{S}^2$ and any given point $x$ the number $\mu(x,A)$ denotes the probability of finding molecules pointing in the direction $A$. Because the molecules do not distinguish their ends (i.e. they have no head or tail) we have that

\be\label{symm:mu}
\mu(x,A)=\mu(x,-A).
\ee The major insight of De Gennes was the idea to replace the measure with a moment of it, that captures the most physical information (\cite{dgbook,newtonmottram}). Because of the symmetry \eqref{symm:mu} the first order moment $\int_{\mathbb{S}^2}p\, d\mu(x,p)=0$ hence the most significant moment is the second order moment $M(x)=\int_{\mathbb{S}^2}p\otimes p \, d\mu(x,p)$. If we take $\mu(x,\cdot)$ to be the uniform distribution on the unit sphere then $M(x)$ becomes equal to $\frac{1}{3}Id$. Thus, we can define a ``$Q$-tensor" as 

$$Q(x):=\int_{\mathbb{S}^2} p\otimes p \,d\mu(x,p)-\frac{1}{3}Id$$

The $Q$-tensor thus defined is a three by three symmetric and traceless matrix. By linear algebra, we have its spectral representation as:

\be
Q=\lambda_1n\otimes n+\lambda_2 m\otimes m+\lambda_3 l\otimes l
\ee with $\lambda_1,\lambda_2,\lambda_3$ eigenvalues of $Q$ with the corresponding $n,m,l$ as an orthonormal system of eigenvectors. Since $Q$ is traceless we have 

\be 
\lambda_1+\lambda_2+\lambda_3=0
\ee

For the eigenvectors we have:

\be
n\otimes n+m\otimes m+l\otimes l=Id
\ee

We can further assume without loss of generality that $l=e_3$. Indeed, whatever $l\in\mathbb{S}^2$ is, there exists a rotation $R\in O(3)$ that takes it into $e_3$. Then $RQR^t$ has the representation:

 \be
RQR^t=\lambda_1Rn\otimes Rn+\lambda_2 Rm\otimes Rm+\lambda_3 e_3\otimes e_3
\ee

This says that in a suitable choice of coordinates we can take one of the eigenvectors to be $e_3$. This choice has the advantage that then $Q$ is simpler, namely if $e_3$ is an eigenvector,  we can represent $Q$ in coordinates as:

 \be\label{q:rep}
Q=\left(\begin{array}{lll} x & z & 0 \\ z & y & 0 \\ 0 & 0 & -x-y \end{array}\right)
\ee

We can then denote:

\be
n=(\cos\theta,\sin\theta,0), \, m=(-\sin\theta,\cos\theta, 0), l=e_3
\ee

Then following the notations used in \cite{sluckin} we aim to represent

\be\label{Q:stheta0}
Q=\frac{3}{2}S_1 (n\otimes n-\frac{1}{3}Id)+\frac{3}{2}S_2 (m\otimes m-e_3\otimes e_3)
\ee with $n,m,e_3$ an orthonormal family of eigenvectors of $Q$. This type of representation is physically relevant particularly from an optical point of view (\cite{Virgabook}) with $n=(\cos\theta,\sin\theta,0)$ representing the optical director, the average preferred orientation of the molecules and  $S_1,S_2$ representing ``scalar order parameters" indicating the average level of ordering around the optical director, respectively perpendicular to it.

In terms of these variables Section~\ref{sec:corotational} has a clear interpretation of the dynamics: in the presence of a co-rotational flow, the dynamics amount to a rotation of the optical director and the only non-trivial dynamics occurs at the level of eigenvalues. In particular there exist periodic in time dynamics which involve just a rotation of the optical director.

However, despite the physical advantages of using the $S_1,S_2,\theta$ description, this representation has certain degeneracies associated to it. We note that unlike in the $(x,y,z)$ representation of $Q$, one cannot uniquely associate to a given matrix $Q$ a unique triple 
$S_1,S_2,\theta$. Indeed, writing $Id_2:=\left(\begin{array}{lll} 1 & 0  & 0\\ 0 & 1 & 0 \\ 0 & 0 & 0  \end{array}\right)$ and noting that $n\otimes n+m\otimes m=Id_2$ we have:

\bea\label{Q:stheta0}
Q=&\frac{3}{2}S_1(n\otimes n-\frac{1}{3}Id_2)+\frac{3}{2}S_2 m\otimes m-\frac{3}{2}(\frac{S_1}{3}+S_2)e_3\otimes e_3\\
=&\frac{3}{2}S_1(n\otimes n-\frac{1}{3}n\otimes n-\frac{1}{3}m\otimes m)+\frac{3}{2}S_2 m\otimes m-\frac{3}{2}(\frac{S_1}{3}+S_2)e_3\otimes e_3\non\\
=&S_1 n\otimes n+\frac{3}{2}(S_2-\frac{S_1}{3})m\otimes m-(\frac{3}{2}S_2+\frac{S_1}{2})e_3\otimes e_3
\eea where $n,m$ are eigenvectors of the  ``2d" matrix:

$$P:=Q+(x+y)e_3\otimes e_3=\left(\begin{array}{lll} x & z & 0 \\ z & y & 0 \\ 0 & 0 & 0 \end{array}\right).$$

  On the other hand, by the linear algebra we have:

\be\label{eq:spectral}
\left(\begin{array}{lll} x & z & 0 \\ z & y & 0 \\ 0 & 0 & 0 \end{array}\right)=\lambda_1 n\otimes n+\lambda_2 m\otimes m
\ee where $\lambda_1,\lambda_2$ are eigenvalues of $P$, while $n$ and $m$ are the corresponding eigenvectors. The representation \eqref{eq:spectral} does not uniquely determine $\lambda_1,\lambda_2$ because one can interchange the eigenvalues (with a corresponding change of eigenvectors). If we choose however an order, say $\lambda_1\ge \lambda_2$ the eigenvalues are uniquely determined. In our case this amounts to the choice:

\be\label{ass:S}
S_1\ge \frac{3}{2}(S_2-\frac{S_1}{3})\leftrightarrow S_1\ge S_2
\ee

We have a further degeneracy if $\lambda_1=\lambda_2$ namely then we can choose any $n\perp m$ as eigenvectors, so for $\lambda_1=\lambda_2$ i.e. in our notations $S_1=S_2$ we have that $\theta$ can be chosen arbitrarily.

Furthermore we have a degeneracy in our choice of $\theta$ variable. Indeed, of one replaces $\theta$ by $\theta+\pi$ we have that $n\otimes n$ respectively $m\otimes m$ are the same.  We will assume that

\be\label{ass:theta}
\theta\in [-\frac{\pi}{4},\frac{\pi}{4})
\ee
Taking into account these degeneracies the representation of $Q$ becomes:

\be
Q=\frac{3}{2}S_1 \left(\begin{array}{lll} \cos^2\theta-\frac{1}{3} & \frac{\sin(2\theta)}{2} & 0 \\
 \frac{\sin(2\theta)}{2}  & \sin^2\theta-\frac{1}{3} & 0 \\ 0 & 0 & -\frac{1}{3}\end{array}\right)+\frac{3}{2}S_2 \left(\begin{array}{lll} \sin^2\theta & -\frac{\sin(2\theta)}{2} & 0 \\
 -\frac{\sin(2\theta)}{2}  & \cos^2\theta & 0 \\ 0 & 0 & -1\end{array}\right)
\ee
hence
\bea\label{coord:tranx}
x=& \frac 32 S_1\left(\cos^2\theta-\frac 13\right)+\frac 32 S_2\sin^2\theta\nonumber\\
y=&\frac 32 S_1\left(\sin^2\theta-\frac 13\right)+\frac 32 S_2\cos^2 \theta\\
z=&\frac 34 \left(S_1-S_2\right)\sin(2\theta)\nonumber
\eea so
\be\label{coord:tranxy}
x-y=\frac 32 (S_1-S_2)\cos(2\theta), 2z=\frac 32 (S_1-S_2)\sin(2\theta).
\ee

In order to eliminate further degeneracies in $\theta $ it will be convenient to focus just on part of the phase space in $x,y,z$ namely we will assume that $x-y\neq 0$. We note that this is an invariant region of system \eqref{sysabc01}, see Remark \ref{rem:invmani}.

Therefore, we get
$$\cos(2\theta)=\frac{1}{\sqrt{1+\left(\frac{2z}{x-y}\right)^2}}$$
and then, after a couple of algebraic manipulations it follows 

\bea\label{coord:trans}
\theta&=\frac 12 \arctan \left(\frac{2z}{x-y}\right),\non\\
S_1 &=\frac{x+y}{2}+ \frac{{\rm sgn}(x-y)}{2} \sqrt{(x-y)^2+4 z^2},\\
S_2 &=\frac{x+y}{2}-\frac{{\rm sgn}(x-y)}{6} \sqrt{(x-y)^2+4 z^2},\non
\eea
where ${\rm sgn}()$ stands for the signum function.

\begin{remark} Note that the map given by \eqref{coord:tranx} is a diffeomorphism, with inverse \eqref{coord:trans}, from $U_+$ to $V_+$ and from $U_-$ to $V_-$, where
\[
\begin{array}{ll}
U_+=\left\{S_1-S_2>0\right\} \times \left(-\dfrac {\pi}{4},\dfrac{\pi}{4}\right),&
V_+=\left\{x-y>0\right\}\times \mathbb{R},\\ & \\
U_-=\left\{S_1-S_2<0\right\} \times \left(-\dfrac {\pi}{4},\dfrac{\pi}{4}\right),& 
V_-=\left\{x-y<0\right\}\times \mathbb{R}.
\end{array}
\]
Moreover, \eqref{coord:tranx} maps the plane $\{S_1-S_2=0\}$ into the straight line $\{x-y=0,\ z=0\}$; the half-plane $\{S_1-S_2>0,\ \theta=\pi/4 \}$ into the half-plane $\{x-y=0,\ z>0\}$; the half-plane $\{S_1-S_2<0,\ \theta=\pi/4 \}$ into the half-plane $\{x-y=0,\ z<0\}$; the half-plane $\{S_1-S_2>0,\ \theta=-\pi/4 \}$ into the half-plane $\{x-y=0,\ z<0\}$; and the half-plane $\{S_1-S_2<0,\ \theta=-\pi/4 \}$ into the half-plane $\{x-y=0,\ z>0\}$. 
\end{remark}

We can aim now to translate into these coordinates the short time dynamics. Namely system \eqref{sysabc01} together with \eqref{coord:tranx} provide the following equations
\begin{equation}\label{ss1}
\begin{array}{l}
\dot{S}_1=\displaystyle{\frac{1}{3}(1+3S_1)(3S_2-3S_1+2)\sin(2\theta)},\\
\\
\dot{S}_2=\displaystyle{\frac{1}{9} \left(27 S_2^2-27 S_1S_2-9S_2+3 S_1-2\right)\sin(2\theta)},\\
\\
\dot{\theta}=\displaystyle{\frac{(4+3S_1+9S_2)\cos(2\theta)}{9(S_1-S_2)}},
\end{array}
\end{equation}
in the region $\{S_1-S_2 \neq 0\} \times [-\pi/4,\pi/4]$. Nevertheless, in the next result we only describe the dynamical behaviour in the region $\{S_1-S_2 > 0\} \times [-\pi/4,\pi/4]$. In the rest of the phase space the behaviour follows similarly. System \eqref{ss1} is integrable and the two independent first integrals are

\begin{equation}\label{ss2}
\begin{array}{l}
\mathcal{V}_1=\displaystyle{\frac{-2+3S_1+9S_2}{3(-2+3S_1+9S_2)+27(S_1-S_2)S_2\cos(2\theta)}},\\
\\
\mathcal{V}_2=\displaystyle{\frac{8 - 27 S_1^2 + 9 (8 - 3 S_2) S_2 + 6 S_1 (4 + 9 S_2) +
 27 (S_1 - S_2)^2\cos(4 \theta)}
{288\left(1+3S_1-9S_1\cos^2(\theta)-9S_2\sin^2(\theta)\right)^2}}.
\end{array}
\end{equation}

% System \eqref{ss1} is integrable and the two independent first integrals are
% 
% \begin{equation}\label{ss2}
% \begin{array}{l}
% \mathcal{V}_1=\displaystyle{\frac{-2+3S_1+9S_2}{-6+9(1+3\cos(2\theta))\sin(\theta)+54S_2\sin^2(\theta)}},\\
% \\
% \mathcal{V}_2=\displaystyle{\frac{4+12S_1+36S_2-27(S_1-S_2)^2\sin^2(2\theta)}
% {144\Bigg(-1+\Big(-3+9\cos^2(\theta)\Big)\sin(\theta)+9S_2\sin^2(\theta)\Bigg)^2}}.
% \end{array}
% \end{equation}

\begin{proposition}\label{propequil1}
Consider the system \eqref{ss1} defined in $\{S_1-S_2> 0\} \times [-\pi/4,\pi/4]$.
\begin{itemize}
 \item [a)] The planes $\theta=\pm \pi/4$ are invariant under the flow.
 \item [b)] The singular points are: every point in the straight line $r:=\{4+3S_1+9S_2=0, \theta=0\}$ and the points $E_2=(2/3,0,-\pi/4)$ and $E_3=(2/3,0,\pi/4)$.
 \item [c)] The surfaces
 \begin{eqnarray*}
  \Pi_1:=\{S_1 + 3 S_2 + 3(S_1-S_2)\sin(2\theta)&=-\frac 4 3\},\\
  \Pi_2:=\{S_1 + 3 S_2 - 3(S_1-S_2)\sin(2\theta)&=- \frac 4 3\},
 \end{eqnarray*}
are invariant under the flow. Moreover, $\Pi_1$ contains the singular point $E_2$ and $\Pi_2$ contains the singular point $E_3$.
 \item [d)] The flow of system \eqref{ss1} is topologically equivalent to the one represented in Figure \ref{figspaceS}.
\end{itemize}
\end{proposition}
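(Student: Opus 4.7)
The plan is to prove parts (a)--(c) by a combination of direct computation and transport along the coordinate change of the Remark, and to obtain (d) by pulling back the global phase portrait already established in Proposition \ref{propequil}. Part (a) is immediate: $\cos(2\theta)$ is a factor of $\dot\theta$ in \eqref{ss1}, hence $\dot\theta\equiv 0$ on the planes $\theta=\pm\pi/4$.

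For (b), set each component of the right-hand side of \eqref{ss1} to zero and split on the way $\dot\theta=0$ is realised. If $\cos(2\theta)=0$ then $\theta=\pm\pi/4$ and $\sin(2\theta)=\pm 1$; from $\dot S_1=0$ one gets either $1+3S_1=0$ (which, together with $\dot S_2=0$, forces $S_2\in\{-1/3,1/3\}$, both inconsistent with $S_1>S_2$ when $S_1=-1/3$) or $3S_2-3S_1+2=0$, and the latter, combined with $\dot S_2=0$, yields $(S_1,S_2)=(2/3,0)$, producing exactly $E_2$ and $E_3$. If instead $4+3S_1+9S_2=0$, a short direct check shows that no solution exists in the region $S_1>S_2$ unless $\sin(2\theta)=0$, i.e.\ $\theta=0$, and this produces precisely the singular line $r$.

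For (c), a direct computation from \eqref{coord:tranx} gives
\[
x+y=\tfrac{1}{2}(S_1+3S_2),\qquad 2z=\tfrac{3}{2}(S_1-S_2)\sin(2\theta),
\]
whence
\[
x+y\pm 2z=-\tfrac{2}{3}\ \Longleftrightarrow\ S_1+3S_2\pm 3(S_1-S_2)\sin(2\theta)=-\tfrac{4}{3},
\]
exhibiting $\Pi_1$ and $\Pi_2$ as the pullbacks of the invariant planes of Proposition \ref{propequil}. Invariance of $\Pi_1,\Pi_2$ under \eqref{ss1} therefore follows from the invariance of those planes under \eqref{sysabc01}, and the memberships $E_2\in\Pi_1$ and $E_3\in\Pi_2$ are immediate substitutions.

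For (d), the coordinate change of the Remark conjugates the flow of \eqref{ss1} on the open slab $U_+$ with that of \eqref{sysabc01} on $V_+=\{x>y\}$, so the interior phase portrait is topologically equivalent, via the inverse diffeomorphism, to the restriction of Figure \ref{figspace} to $\{x>y\}$; under this identification $r$, $E_2$, $E_3$ and $\Pi_1,\Pi_2$ correspond to their $(x,y,z)$-namesakes, $E_3$ is inherited as an attracting node and $E_2$ as a repelling one. To finish, one must describe the flow on the two invariant boundary planes $\theta=\pm\pi/4$, where $\dot\theta\equiv 0$ and the dynamics reduce to an autonomous two-dimensional system in $(S_1,S_2)$; a direct phase-plane analysis, aided by the restrictions of the first integrals \eqref{ss2}, yields the remaining structure and matches continuously with the interior flow. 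The main obstacle is precisely this boundary step: the coordinate change collapses the planes $\theta=\pm\pi/4$ onto the singular set $\{x=y\}$, so the interior picture cannot simply be transported and the boundary flow must be analysed directly, with care to verify continuous gluing with the trajectories approaching it from $U_+$.
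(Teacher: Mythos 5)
Parts (a)--(c) of your argument are fine and consistent with the paper, which dispatches them as straightforward computations; your observation that $x+y=\tfrac12(S_1+3S_2)$ and $2z=\tfrac32(S_1-S_2)\sin(2\theta)$, so that $\Pi_1,\Pi_2$ are exactly the preimages of the invariant planes $x+y\pm 2z=-2/3$ of Proposition \ref{propequil}, is a clean way to see (c) (just add a word on why the trace curves $\Pi_i\cap\{\theta=\pm\pi/4\}$ are invariant for the reduced boundary flow, since the conjugacy you invoke is only available for $|\theta|<\pi/4$; a continuity argument or the direct check suffices).

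The genuine gap is in part (d), and it comes from a misreading of how the change of variables behaves on the boundary. You claim that \eqref{coord:tranx} ``collapses the planes $\theta=\pm\pi/4$ onto the singular set $\{x=y\}$'', conclude that the known picture cannot be transported there, and then merely assert that ``a direct phase-plane analysis \ldots yields the remaining structure'' --- so the decisive boundary step is never actually carried out. In fact the degeneration of \eqref{coord:tranx} occurs only on $\{S_1=S_2\}$ (which is blown down to the line $x=y,\ z=0$ and is excluded from the region), whereas on $\theta=\pi/4$ the map reduces to the linear bijection $(S_1,S_2)\mapsto\bigl(x=y=\tfrac14 S_1+\tfrac34 S_2,\ z=\tfrac34(S_1-S_2)\bigr)$, i.e.\ it sends the half-plane $\{S_1-S_2>0,\ \theta=\pi/4\}$ diffeomorphically onto the invariant half-plane $\{x=y,\ z>0\}$ of \eqref{sysabc01} (and similarly for $\theta=-\pi/4$ and $z<0$); this is stated in the paper's Remark preceding \eqref{ss1}. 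The plane $x=y$ is not singular for \eqref{sysabc01}: it is the level set $I_{1/6}$ of $\mathcal{H}_1$ (Remark \ref{rem:invmani}), and its dynamics are already described in the proof of Proposition \ref{propequil} via \eqref{pp3}. This is exactly how the paper finishes: it restricts \eqref{ss1} to $\theta=\pi/4$, obtaining the planar system in $(S_1,S_2)$, notes that $S_1=-1/3$ and $-S_1+3S_2+2/3=0$ are invariant lines (the traces of $\Pi_1$ and $\Pi_2$), and then uses the boundary diffeomorphism just described to import the known phase portrait rather than redoing a phase-plane analysis. Also note that $E_2,E_3$ have $\theta=\mp\pi/4$ and their images have $x=y=1/6$, so their identification with the nodes of \eqref{sysabc01} likewise goes through this boundary correspondence, not through the interior conjugacy $U_+\to V_+$ as your write-up suggests. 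To repair your proof, either carry out the boundary phase-plane analysis you promise, or (shorter, and as the paper does) use the boundary diffeomorphism to transport the dynamics of \eqref{sysabc01} on $\{x=y,\ z\gtrless 0\}$.
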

\begin{proof}
The three first statements follow by straightforward computations. Two conclude the four one, we only need to describe the dynamical behaviour of system \eqref{ss1} over the invariant plane $\theta=\pi/4$ (the case $\theta=-\pi/4$ follows similarly), since the behaviour in the rest of the phase space is conjugated with the one depicted in Figure \ref{figspace} with conjugacy given by \eqref{coord:tranx}.

Reducing system \eqref{ss1} to the invariant plane $\theta=\pi/4$ we get
\begin{equation}\label{ss2}
\begin{array}{l}
\dot{S}_1=\displaystyle{\frac{1}{3}(1+3S_1)(3S_2-3S_1+2)},\\
\\
\dot{S}_2=\displaystyle{\frac{1}{9} \left(27 S_2^2-27 S_1S_2-9S_2+3 S_1-2\right)}.\\
\end{array}
\end{equation}
It can be checked that $S_1=-1/3$ and $-S_1+3S_2+2/3=0$ are invariant lines by the flow of system \eqref{ss2}. Note that these lines corresponds with the intersection of the manifolds $\Pi_1$ and $\Pi_2$, respectively, with the plane $\theta=\pi/4$. We conclude the resting behaviour just by considering that \eqref{coord:tranx} also maps diffeomorphically the half plane $\{x-y=0, z>0\}$ into the half plane $\{S_1-S_2>0, \theta=\pi/4\}.$
\end{proof}

\begin{figure}[ht]
\begin{center}
\includegraphics{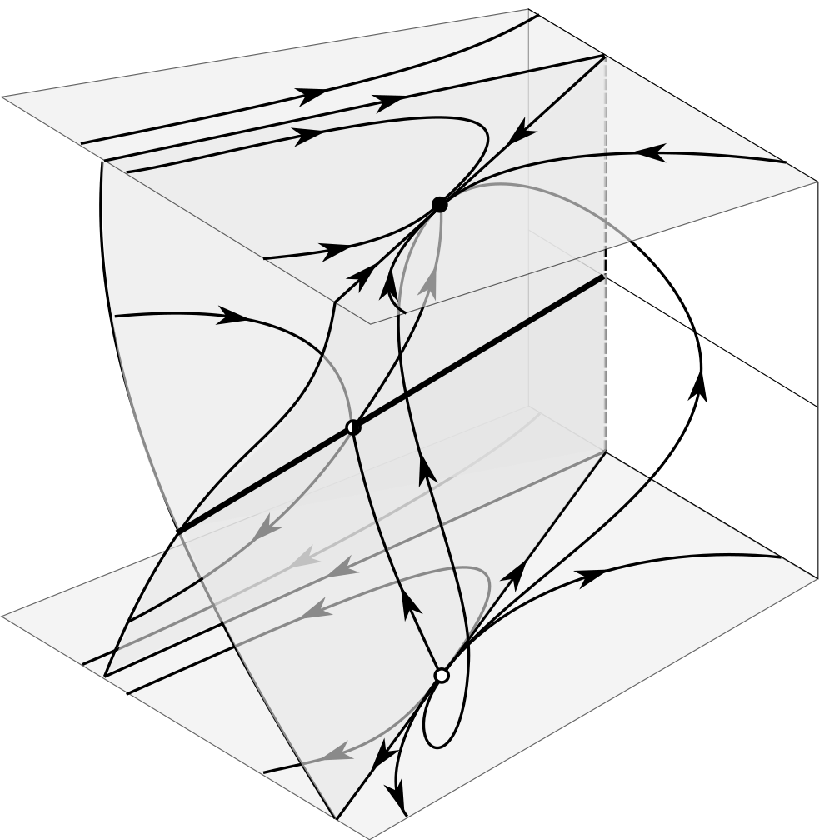}\vspace{0.35cm}
\begin{picture}(0,0)
\put(-0,150){{$S_1-S_2=0$}}
\put(-200,235){\rotatebox{10}{$\theta=\frac {\pi}4$}}
\put(-245,75){\rotatebox{17}{$\theta=-\frac {\pi}4$}}
\put(-160,112){$r$}
\put(-130,190){$E_3$}
\put(-135,45){$E_2$}
\put(-220,135){\rotatebox{-65}{$\Pi_1$}}
\put(-185,110){\rotatebox{45}{$\Pi_2$}}
\end{picture}
\caption{Global phase portrait of system \eqref{ss1} in the region $\{S_1-S_2>0\} \times [-\frac {\pi}4,\frac {\pi}4]$. Note that system \eqref{ss1} is not defined over the plane $S_1-S_2=0$. Segment $r$ in the draw is the image, by the transformation \eqref{coord:tranx}, of the continuum of non-hyperbolic singular points $r$ of system \eqref{sysabc01}, which are contanined in $\{x-y>0\}$, see Proposition \ref{propequil}. Similarly, singular points $E_2$ and $E_3$ are the respective images by \eqref{coord:tranx} of the singular points $E_2$ and $E_3$ of system  \eqref{sysabc01}. Finally, invariant surfaces $\Pi_1$ and $\Pi_2$ are the images by the same map of the invariant planes of system \eqref{sysabc01}. Therefore, $\Pi_1$ is the boundary of the $W^s(E_3)$ and $\Pi_2$ is the boundary of the $W^u(E_2)$.  }\label{figspaceS}
\end{center}
\end{figure}
\begin{remark}
System \eqref{ss1} is not defined over the plane $S_1=S_2$  due to the last equation, nevertheless the flow can be extended to this plane just by considering a vertical flow over it. Notice that this situation is compatible with the flow depicted in Figure $3$ and therefore, the change of variables from $(x,y,z)$ to $(S_1,S_2,\theta)$ can be understood as a blow up of the straight line $x=y,z=0$ into the plane $S_1=S_2$.
\end{remark}
\section{Conclusions}

We have analysed a model describing  the dynamics of  nematic liquid crystals in the presence of an imposed shear flow and in a spatially homogeneous setting. The model \eqref{eqgen1} we consider is more nonlinear in the flow effect, when compared with a previously used model, namely \eqref{eqgenfake} (studied in  \cite{sluckin,chill}). This increased nonlinearity has a positive effect in what concerns the predictions of the model,  providing that in the long time one obtains qualitatively the same behaviour as in the model without flow.

 The main driving mechanism for the long-time behaviour is the co-rotational parameter $\xi$. When this $\xi$ is  set equal to zero one obtains evolution towards rotating solutions, while for non-zero $\xi$ one has different dynamics in both short-time and long-time regimes, dynamics which can be completely described analytically and also expressed in terms of the more standard physical variables, the scalar order parameters and the director.

\section*{Acknowledgment.} A. M and A.Z. were partially supported by a Grant of the Romanian National
Authority for Scientific Research and Innovation, CNCS-UEFISCDI, project number
PN-II-RU-TE-2014-4-0657; A.Z. was also partially supported by the Basque Government through the BERC
2014-2017 program; and by the Spanish Ministry of Economy and Competitiveness
MINECO: BCAM Severo Ochoa accreditation SEV-2013-0323; 
A.E.T. was partially supported by the Spanish Ministry of Economy and Competitiveness through the project MTM2014-54275-P.


\begin{thebibliography}{99}


\bibitem{ADL14} H. Abels, G. Dolzmann and Y.-N. Liu,
Well-posedness of a fully coupled Navier-Stokes/Q-tensor system with inhomogeneous boundary data,
SIAM J. Math. Anal., \textbf{46}, 3050-3077, 2014.


\bibitem{sluckin} E.V. Alonso,  A. A. Wheeler, and T. J. Sluckin, \emph{Nonlinear dynamics of a nematic liquid crystal in the presence of a shear flow} Proceedings of the Royal Society of London A: Mathematical, Physical and Engineering Sciences. Vol. 459. No. 2029. The Royal Society, 2003.

\bibitem{BE94} A.-N. Beris and B.-J. Edwards,
\emph{Thermodynamics of flowing systems with internal microstructure},
Oxford Engineerin Science Series, \textbf{36}, Oxford university Press, Oxford, New
York, 1994.

\bibitem{BV04} S. Boyd and L. Vandenberghe,  Convex optimization. Cambridge University Press, Cambridge, 2004

\bibitem{CRWX15} C. Cavaterra, E. Rocca, H. Wu and X. Xu,
Global strong solutions of the full Navier-Stokes and Q-tensor
system for nematic liquid crystal flows in two dimensions, SIAM J.
Math. Anal., \textbf{48}(2), 1368-1399, 2016.



\bibitem{chill} D.R.J. Chillingworth, E. Vicente Alonso, A.A. Wheeler,
\emph{Geometry and dynamics of a nematic liquid crystal in
a uniform shear flow}, J. Phys. A: Math. Gen, 34 (2001), 1393--1404.

\bibitem{dgbook} de Gennes P.G. and J. Prost. The Physics of Liquid Crystals. Oxford University Press, second edition,
1995.

\bibitem{hale} J.K. Hale,  Asymptotic behavior of dissipative systems. No. 25. 
American Mathematical Soc., 2010.

\bibitem{IXZ} G. Iyer, X. Xu, and A. D. Zarnescu. ``Dynamic cubic instability in a 2D Q-tensor model for liquid crystals." Mathematical Models and Methods in Applied Sciences 25.08 (2015): 1477-1517.

\bibitem{Krupa} M. Krupa and I. Melbourne, \emph{ Nonasymptotically stable attractors in $O(2)$ mode interactions. Normal Forms and Homoclinic Chaos}, (W. Langford and W. Nagata eds.) Fields Institute Communications 4, Amer. Math. Soc., Providence, RI, (1995), pp. 219--232.

\bibitem{maj} A. Majumdar, ``Equilibrium order parameters of nematic liquid crystals in the Landau-de Gennes theory." European Journal of Applied Mathematics 21.2 (2010): 181-203.

\bibitem{newtonmottram}  N.J. Mottram and CJP Newton. ``Introduction to Q-tensor theory." arXiv preprint arXiv:1409.3542 (2014).

\bibitem{PZ12} M. Paicu and A. Zarnescu,
Energy dissipation and regularity for a coupled Navier-Stokes and Q-tensor system,
Arch. Ration. Mech. Anal., \textbf{203}, 45-67, 2012.

\bibitem{Virgabook} E. G.Virga,  Variational theories for liquid crystals. Vol. 8. CRC Press, 1995.

\end{thebibliography}
\end{document}